\newtheorem{Theorem}{Theorem}[section]
\newtheorem{theorem}[Theorem]{Theorem}
\newtheorem{prop}[Theorem]{Proposition}
\newtheorem{pro}[Theorem]{Proposition}
\newtheorem{proposition}[Theorem]{Proposition}
\newtheorem{lem}[Theorem]{Lemma}
\newtheorem{lemma}[Theorem]{Lemma}
\newtheorem{cor}[Theorem]{Corollary}
\newtheorem{dfn}[Theorem]{Definition}
\newtheorem{setting}[Theorem]{Settings}
\def\x{\underline x}
\def\Hom{\operatorname{Hom}}
\def\Ext{\operatorname{Ext}}
\newcommand{\rmg}{\mathrm{g}}
\newcommand{\calD}{\mathcal{D}}
\newcommand{\calF}{\mathcal{F}}
\newcommand{\calM}{\mathcal{M}}
\newcommand{\calN}{\mathcal{N}}
\newcommand{\fka}{\mathfrak{a}}
\newcommand{\fkb}{\mathfrak{b}}
\newcommand{\fkm}{\mathfrak{m}}
\newcommand{\fkn}{\mathfrak{n}}
\newcommand{\fkp}{\mathfrak{p}}
\newcommand{\fkq}{\mathfrak{q}}
\newcommand{\q}{\mathfrak{q}}
\newcommand{\m}{\mathfrak{m}}
\def\H{\operatorname{H}}
\def\depth{\operatorname{depth}}
\def\Supp{\operatorname{Supp}}
\def\Ann{\operatorname{Ann}}
\def\Ass{\operatorname{Ass}}
\def\Assh{\operatorname{Assh}}
\def\Min{\operatorname{Min}}
\def\height{\operatorname{ht}}
\def\Spec{\operatorname{Spec}}
\begin{document}

\title{eventually index of reducibility  on sequentially Cohen-Macaulay modules }

\author{Hoang Le Truong}
\address{Institute of Mathematics, VAST, 18 Hoang Quoc Viet street, 10307, Hanoi, Viet Nam}
\email{hltruong@math.ac.vn}
\thanks{ 
This research is funded by Vietnam National Foundation for Science
and Technology Development (NAFOSTED) under grant number
101.04-2014.15.
\endgraf
{\it Key words and phrases:}
Irreducible submodules, the index of reducibility,   parameter ideals,   Cohen-Macaulay, sequentially Cohen-Macaulay.
\endgraf
{\it 2010 Mathematics Subject Classification:}
13H10, 13A30, 13B22, 13H15.
}

\maketitle

\begin{abstract}
It is shown that  a module is sequentially Cohen-Macaulay  if and only if the index of reducibility for distinguished parameter ideals are eventually constant with special value. As corollaries to the
main theorem we given to characterize the Gorensteinness, Cohen-Macaulayness of local rings in term of eventually index of reducibility for distinguished parameter ideals.
\end{abstract}

\section{Introduction} Throughout this paper let  $R$  be a commutative Noetherian local  ring with  maximal ideal $\m$. Let $M$ be a finitely generated $R$-module of dimension $d >0$. Then we say that an $R$-submodule $N$ of $M$ is {\it irreducible}, if $N$ is not written as the intersection of two larger $R$-submodules of $M$. Every $R$-submodule $N$ of $M$ can be expressed as an irredundant intersection of irreducible $R$-submodules of $M$ and the number of irreducible $R$-submodules appearing in such an expression depends only on $N$ and not on the expression. Let us call, for each parameter ideal $\q$ of $M$, the number $\calN(\q;M)$ of irreducible $R$-submodules of $M$ that appear in an irredundant irreducible decomposition of $\q M$ {\it the index of reducibility} of $M$ with respect to $\q$. Let $S$ be the set of parameter ideals of $M$ and $a$ integer number. Then we say that the index of reducibility for $S$ are {\it eventually} $a$  if there some integer number $n$ such that for all parameter ideals $\q\in S\cap \m^n$, we have $\calN(\q;M)=a.$  

Firstly, perhaps less widely known is a result of Northcott and Rees which states that if
every parameter ideal  of $R$ is irreducible then $R$
is Cohen-Macaulay \cite[Theorem 1]{NR}. Thus, $R$ is Gorenstein if and only if every parameter
ideal is irreducible.  Recently, it was shown by many works that the index of reducibility of parameter ideals can be used to deduce a lot of information on the structure of some classes of modules such as Gorenstein rings(\cite{No},\cite{NR}, \cite{Tr2} \cite{TY}),Cohen-Macaulay modules (\cite{CQT}, \cite{Tr}, \cite{Tr1}, \cite{Tr2}, \cite{TY}), Buchsbaum modules (\cite{GSa}), generalized Cohen-Macaulay modules (\cite{CT}), and so on. In \cite[Theorem 5.2]{CQT}, N. T. Cuong, P. H. Quy and author showed that $M$ is a Cohen-Macaulay module if and only if the index of reducibility for all parameter  ideals of $M$ are eventually Cohen-Macaulay type.    %$r_d(M)$ $\dim_{k}(0):_{\H^d_{\frak m}(M)}\m$ , where $\H^j_\m(M)$ the $j$-th local cohomology of $M$ with support in $\m$.
 The necessary condition of this result can be extended by the author in \cite[Theorem 1.1]{Tr} for a large class of modules called sequentially Cohen-Macaulay modules. %The notion of a sequentially Cohen-Macaulay module was introduced first by Stanley \cite{St} for the graded case and in \cite{Sch} for the local case. 

To sate this result, let us fix some notation. A filtration $$\calD:0\subsetneq D_0\subsetneq D_1 \subsetneq \cdots \subsetneq D_\ell =M $$ of $R$-submodules of $M$ is  called the {\it dimension filtration} of $M$,  if  for all $0 \le i \le \ell-1$, $D_{i-1}$ is the largest submodule of $D_i$ with $\dim_RD_{i-1}<\dim_R D_i$, where $\dim_R(0) = - \infty$ for convention. We say that $M$ is {\it sequentially Cohen-Macaulay}, %in the sense of \cite{CC},
if  $C_i=D_i/D_{i-1}$   is Cohen-Macaulay for all $1 \le i \le \ell$. Let $\x=x_1,x_2, \ldots, x_d$ be a system of parameters of $M$. Then $\x$ is said to be {\it distinguished}, if  $$(x_j \mid d_{i} < j \le d)D_i=(0)$$ for all $0\le i\le \ell$, where $d_i=\dim_R D_i$ (\cite[Definition 2.5]{Sch}). A parameter ideal $\q$ of $M$ is called  {\it distinguished}, if there exists a distinguished system $x_1,x_2, \ldots, x_d$ of parameters of $M$ such that $\q=(x_1,x_2, \ldots, x_d)$. We now denote $r_j(M)=\ell_R((0):_{\H^j_\fkm(M)}\fkm)$ for all $j\in \Bbb Z$ and 
$r(M)=\sum\limits_{j\in\Bbb Z}r_j(M).$ Note that $r_d(M)$ is called {\it Cohen-Macaulay type}.
 With this notation, the author showed in \cite[Theorem 1.1]{Tr} that  if $M$ is a sequentially Cohen-Macaulay module, then the index of reducibility for all distinguished parameter  ideals of $M$ are eventually $r(M)$. From this point of view, it seems now natural to ask  whether the converse of this statement is true. The answer is affirmative, which we are eager to report in the present paper.

\medskip

\begin{theorem}\label{0}
Assume that $R$ is a homomorphic image of a Cohen-Macaulay
local ring. Then the following statements are equivalent.
\begin{enumerate} 
\item[$({\rm 1})$] $M$ is sequentially Cohen-Macaulay.

\item[$({\rm 2})$]  The index of reducibility for all distinguished parameter  ideals of $M$ are eventually $r(M)$.

\item[$({\rm 3})$]  There exists an integer $n$ such that for all distinguished parameter ideals $\q\subseteq \m^n$, we have
$$\calN(\q;M)\le r(M).$$

%\item[$({\rm 3})$]  There exists an integer $n$ such that for all distinguished parameter ideals $\q\subseteq \m^n$, we have
%$$\calN(\q;M)\le\sum\limits_{i=0}^\ell f_0(\q;C_i).$$ 

\end{enumerate}
\end{theorem}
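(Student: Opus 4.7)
The plan is to prove the chain of implications $(1) \Rightarrow (2) \Rightarrow (3) \Rightarrow (1)$. The first implication is exactly \cite[Theorem 1.1]{Tr}, and $(2) \Rightarrow (3)$ is immediate since ``$=r(M)$'' implies ``$\le r(M)$.'' The substantive content is therefore the reverse implication $(3) \Rightarrow (1)$.

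My strategy for $(3) \Rightarrow (1)$ proceeds in two stages. First I would establish, as a preliminary lemma valid for an arbitrary finitely generated $M$, the general lower bound
$$\calN(\q;M) \ \ge\ r(M)$$
for every distinguished parameter ideal $\q \subseteq \m^n$ with $n$ sufficiently large. The idea for this bound is to dissect $\Soc(M/\q M)$ along the dimension filtration $\calD$: since $\q$ is distinguished, the ``top'' parameters $x_{d_i+1},\ldots,x_d$ annihilate $D_i$, so the short exact sequences $0 \to D_{i-1} \to D_i \to C_i \to 0$ induce controlled sequences on the quotients modulo (the appropriate part of) $\q$. Summing independent socle contributions across $i$, and invoking $r_j(M) = \dim_k \Soc \H^j_\fkm(M)$, one aims to produce a socle of total dimension at least $\sum_j r_j(M)=r(M)$. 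Combined with hypothesis $(3)$, this forces the equality $\calN(\q;M) = r(M)$ for all small distinguished $\q$.

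Second, I would deduce that $M$ is sequentially Cohen-Macaulay from this equality by induction on $d=\dim_R M$. For $d=1$ the filtration is $0 \subsetneq D_0=\H^0_\fkm(M) \subsetneq M$ (or trivial), and SCM is equivalent to $\H^0_\fkm(C_1)=0$ where $C_1=M/D_0$. With $\q=(x)$ distinguished so that $xD_0=0$, a careful tracking of $\Soc(M/xM)$ through $0 \to D_0 \to M/xM \to C_1/xC_1 \to 0$ shows that any nonzero $\H^0_\fkm(C_1)$ produces an excess socle element, hence $\calN((x);M)>r(M)$, contradicting the equality forced above. For $d \ge 2$, I would choose a sufficiently generic distinguished parameter $x_1 \in \m^n$ (superficial for $M$ with respect to each $D_i$ and avoiding the relevant non-maximal associated primes), pass to $\overline M=M/x_1 M$, verify that $\overline M$ inherits a version of hypothesis $(3)$, and invoke the inductive hypothesis to conclude $\overline M$ is SCM. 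The compatibility of the dimension filtration with cutting by a generic parameter then lifts the conclusion back to $M$.

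The main obstacle is the descent step in the induction: verifying that hypothesis $(3)$ for $M$ really does force the corresponding hypothesis for $\overline M=M/x_1 M$, with the correct identification of $r(\overline M)$ in terms of $r(M)$ and the local cohomology of $M$ (via the long exact sequence from multiplication by $x_1$ on $\H^\bullet_\fkm(M)$). This demands a precise analysis of how distinguished parameter ideals, the dimension filtration, and each invariant $r_j$ transform under reduction modulo a superficial parameter, together with the prime-avoidance and genericity arguments needed to choose $x_1$. I expect most of the technical effort of the proof to go into securing these reduction statements uniformly enough to close the induction cleanly.
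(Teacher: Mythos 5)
Your skeleton $(1)\Rightarrow(2)\Rightarrow(3)\Rightarrow(1)$ matches the paper, and the first two implications are handled identically. Both stages of your plan for $(3)\Rightarrow(1)$, however, contain genuine gaps. Stage one asserts a \emph{universal} lower bound $\calN(\q;M)\ge r(M)$ for every distinguished parameter ideal $\q$ deep in $\m$, obtained by ``summing independent socle contributions'' along the dimension filtration. This is exactly the point that fails: from the sequence $0\to D_{\ell-1}/(\q M\cap D_{\ell-1})\to M/\q M\to L/\q L\to 0$ (with $L=M/D_{\ell-1}$) the socle contributions are \emph{not} independent; additivity $\calN(\q;M)=\calN(\q;D_{\ell-1})+\calN(\q;L)$ requires the map $\Soc(M/\q M)\to\Soc(L/\q L)$ to be surjective, and the paper's Lemma~\ref{2.10} proves this only under the hypothesis that $L$ is Cohen--Macaulay --- which is precisely what assertion $(1)$ is trying to establish, so the argument as sketched is circular. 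Tellingly, the paper proves only the \emph{existential} version of your bound (Corollary~\ref{Cmain}: for each $n$ \emph{some} distinguished $\q\subseteq\m^n$ satisfies $r(M)\le\calN(\q;M)$), and obtains even that as a consequence of the main theorem rather than as an ingredient. The actual proof instead constructs, via Propositions~\ref{exists element} and~\ref{exists}, one carefully chosen ``Goto sequence of type II'' $x_1,\dots,x_{d-2}$ along which $r$ does not drop and associated primes are controlled, cuts down to dimension $2$, and extracts the Cohen--Macaulayness of $L$ (Proposition~\ref{P4.60}) from a local-cohomology computation on $M/\q_{d-2}M$; no uniformity over all deep distinguished ideals is needed or claimed.

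Stage two has a second, independent gap: you induct on $d$ by passing to $\overline M=M/x_1M$ and then ``lift'' sequential Cohen--Macaulayness back to $M$. This ascent fails in general: every module of dimension one is sequentially Cohen--Macaulay, so for a non-Cohen--Macaulay unmixed domain $R$ of dimension two the quotient $R/xR$ is sequentially Cohen--Macaulay for every parameter $x$ while $R$ is not; the dimension filtration of $M/xM$ is also not the image of that of $M$. The paper avoids this by inducting along the dimension filtration rather than by cutting parameters: once $L$ is known to be Cohen--Macaulay, Lemma~\ref{2.10} splits the index of reducibility, the hypothesis is transferred to the lower-dimensional module $D_{\ell-1}$, and sequential Cohen--Macaulayness of $M$ follows from that of $D_{\ell-1}$ together with Cohen--Macaulayness of $L$ by definition. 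To repair your proposal you would need to replace both the universal lower-bound lemma and the lifting step with arguments of this kind.
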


\medskip

From the main result, we get the following results.

\begin{cor}\label{Cmain}
For all  integers $n$  there exists  a distinguished parameter ideal $\frak q\subseteq \frak m^n$, we
    have
$$ r(M)\leqslant \calN(\q;M)$$.
\end{cor}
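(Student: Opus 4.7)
The plan is to argue by contradiction, leveraging the equivalence (1) $\Leftrightarrow$ (2) $\Leftrightarrow$ (3) established in Theorem \ref{0}. Suppose the corollary fails, so there is an integer $n$ for which every distinguished parameter ideal $\q \subseteq \m^n$ satisfies $\calN(\q; M) < r(M)$. In particular $\calN(\q; M) \le r(M)$ for all distinguished parameter ideals $\q \subseteq \m^n$, which is precisely condition (3) of Theorem \ref{0}.

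Next I would invoke the chain of implications supplied by the main theorem: from (3) one concludes that $M$ is sequentially Cohen-Macaulay, that is, condition (1) holds; then (1) $\Rightarrow$ (2) produces an integer $n'$ such that every distinguished parameter ideal $\q \subseteq \m^{n'}$ satisfies $\calN(\q; M) = r(M)$ exactly. The punchline is to exhibit a single distinguished parameter ideal contained in $\m^{\max(n,n')}$; this ideal would be forced to have $\calN(\q; M) = r(M)$ by the second condition and $\calN(\q; M) < r(M)$ by our standing assumption, a contradiction.

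The only step that is not immediate is producing, for an arbitrarily large $N$, a distinguished parameter ideal sitting inside $\m^N$. Starting from any distinguished system of parameters $x_1, \dots, x_d$ of $M$, I would replace each $x_j$ by a sufficiently high power $x_j^{t_j}$ with $x_j^{t_j} \in \m^N$; the resulting sequence is still a system of parameters, and the defining conditions $(x_j^{t_j} \mid d_i < j \le d)\, D_i = 0$ remain valid because they follow directly from $(x_j \mid d_i < j \le d)\, D_i = 0$. Thus $(x_1^{t_1},\dots,x_d^{t_d})$ is a distinguished parameter ideal contained in $\m^N$, as required. This elementary observation about preservation of the distinguished property under taking powers is the only technical obstacle, and once it is in hand the contradiction argument closes the proof.
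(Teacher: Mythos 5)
Your argument is correct and matches the paper's intent: the corollary is stated without proof as an immediate consequence of Theorem \ref{0}, and your contradiction argument --- negating the claim gives condition (3), hence sequential Cohen--Macaulayness, hence condition (2), which clashes with the negation once one exhibits a distinguished parameter ideal inside an arbitrarily high power of $\m$ by raising a distinguished system of parameters to powers (a stability fact the paper itself records in Section 2) --- is precisely that derivation, and it is not circular since Corollary \ref{Cmain} is nowhere used in the proof of Theorem \ref{0}. The one point to flag is that you implicitly rely on the standing hypothesis of Theorem \ref{0} that $R$ is a homomorphic image of a Cohen--Macaulay local ring, which the corollary's statement omits but clearly intends.
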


\begin{cor}
Assume that $R$ is a homomorphic image of a Cohen-Macaulay local ring. Then the following statements are equivalent.
\begin{enumerate} 
\item[$({\rm 1})$] $M$ is Cohen-Macaulay.

\item[$({\rm 2})$]  The index of reducibility for all parameter  ideals of $M$ are eventually Cohen-Macaulay type. 

\item[$({\rm 3})$]  The index of reducibility for all distinguished parameter  ideals of $M$ are eventually Cohen-Macaulay type. 

\end{enumerate}

\end{cor}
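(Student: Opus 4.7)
The plan is to deduce the corollary from Theorem~\ref{0} by exploiting how Cohen-Macaulayness collapses the dimension filtration. The key observation is that if $M$ is Cohen-Macaulay, then its dimension filtration is just $0\subsetneq M$, so every system of parameters is automatically distinguished and every parameter ideal is distinguished; moreover $\H^j_\m(M)=0$ for $j\ne d$ forces $r(M)=r_d(M)$, which is by definition the Cohen-Macaulay type. With this observation in hand, the implication (1)$\Rightarrow$(2) follows from (1)$\Rightarrow$(2) of Theorem~\ref{0}, since the set of parameter ideals coincides with the set of distinguished ones and the eventual value $r(M)$ equals the Cohen-Macaulay type. The implication (2)$\Rightarrow$(3) is immediate because distinguished parameter ideals form a subclass of all parameter ideals.

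The substantive implication is (3)$\Rightarrow$(1). Assume $\calN(\q;M)=r_d(M)$ for every distinguished parameter ideal $\q$ contained in a sufficiently high power of $\m$. Since $r_d(M)\le r(M)=\sum_j r_j(M)$ holds unconditionally, hypothesis (3) of Theorem~\ref{0} is satisfied, hence $M$ is sequentially Cohen-Macaulay. Feeding this conclusion back into (1)$\Rightarrow$(2) of Theorem~\ref{0}, the eventual value of $\calN(\q;M)$ on distinguished parameter ideals must equal $r(M)$. Comparing with our hypothesis forces $r(M)=r_d(M)$, equivalently $r_j(M)=0$ for every $j\ne d$.

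It remains to translate the vanishing of $r_j(M)$ into the vanishing of local cohomology. Since $\H^j_\m(M)$ is $\m$-torsion, any nonzero element generates a nonzero submodule killed by a power of $\m$, which therefore has nonzero socle contained in $(0):_{\H^j_\m(M)}\m$; hence $r_j(M)=0$ implies $\H^j_\m(M)=0$ for every $j\ne d$, which is exactly the Cohen-Macaulay property. I anticipate no serious obstacle; the argument is essentially a two-way application of Theorem~\ref{0} combined with the elementary socle observation for $\m$-torsion modules. The only mild subtlety is that one must read condition (3) of Theorem~\ref{0} as the inequality ``$\le r(M)$'' rather than an equality, so that the a priori weaker bound $r_d(M)\le r(M)$ is already enough to trigger sequential Cohen-Macaulayness.
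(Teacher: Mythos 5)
Your proof is correct and follows essentially the same route as the paper's: both reduce (3)$\Rightarrow$(1) to Theorem~\ref{0} via the inequality $r_d(M)\le r(M)$, then use the sequentially Cohen--Macaulay conclusion together with Theorem~\ref{3.800} to force $r(M)=r_d(M)$ and hence the vanishing of $r_j(M)$ for $j<d$. Your explicit socle argument showing $r_j(M)=0\Rightarrow \H^j_\m(M)=0$ for the $\m$-torsion module $\H^j_\m(M)$ is left implicit in the paper but is the intended final step.
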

\begin{theorem}Assume that $R$ is a homomorphic image of a Cohen-Macaulay local ring. Then
 $R$ is Gorenstein if and only if the index of reducibility for all distinguished parameter  ideals are eventually $1$. 
\end{theorem}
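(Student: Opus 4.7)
The plan is to apply Theorem \ref{0} with $M=R$ and translate the hypothesis into conditions on $r(R)$. I first record two basic observations: $r_d(R)\ge 1$, since $\H^d_\fkm(R)\neq 0$ is $\fkm$-torsion and therefore has nonzero socle; and for any sequentially Cohen-Macaulay module $M$ with dimension filtration $0\subsetneq D_0\subsetneq\cdots\subsetneq D_\ell=M$, the structure theorem for local cohomology gives $\H^{d_i}_\fkm(M)\cong\H^{d_i}_\fkm(C_i)\neq 0$ and $\H^j_\fkm(M)=0$ for $j\notin\{d_0,\dots,d_\ell\}$. Thus each $r_{d_i}(M)\ge 1$, yielding the key bound $r(M)\ge\ell+1$.

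For the forward direction, assume $R$ is Gorenstein. Then $R$ is Cohen-Macaulay of type $1$, so $r_j(R)=0$ for $j<d$ and $r_d(R)=1$, whence $r(R)=1$. Since a Cohen-Macaulay ring is trivially sequentially Cohen-Macaulay (the dimension filtration reduces to $0\subsetneq R$), the implication $(1)\Rightarrow(2)$ of Theorem \ref{0} yields $\calN(\q;R)=r(R)=1$ for all distinguished parameter ideals contained in a suitably high power of $\fkm$.

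For the converse, suppose $\calN(\q;R)$ is eventually $1$ on distinguished parameter ideals $\q\subseteq\fkm^n$. By Corollary \ref{Cmain} applied to this $n$, I can select a distinguished $\q\subseteq\fkm^n$ with $r(R)\le\calN(\q;R)=1$; combined with $r(R)\ge r_d(R)\ge 1$ this forces $r(R)=1$. The hypothesis is therefore exactly condition $(2)$ of Theorem \ref{0}, so $R$ is sequentially Cohen-Macaulay. The preliminary estimate now reads $1=r(R)\ge\ell+1$, forcing $\ell=0$. Hence $R$ is Cohen-Macaulay with $r_d(R)=r(R)=1$, i.e.\ $R$ has Cohen-Macaulay type $1$, which is the defining property of a Gorenstein ring.

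The argument is essentially bookkeeping with $r(R)$, and the only inputs beyond Theorem \ref{0} and Corollary \ref{Cmain} are the decomposition $\H^j_\fkm(M)\cong\H^j_\fkm(C_i)$ for sequentially Cohen-Macaulay modules (standard in the references supporting the main theorem) and the elementary bound $r_d(R)\ge 1$. The main conceptual step---and the only one with any content---is the observation that the minimal value $r(R)=1$ simultaneously rules out any lower-dimensional strata in the dimension filtration \emph{and} pins the Cohen-Macaulay type down to $1$, so both the ``Cohen-Macaulay'' and ``type $1$'' parts of Gorensteinness are extracted from a single numerical equality.
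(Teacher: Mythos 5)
Your proof is correct, and the overall strategy---reducing Gorensteinness to the single numerical equality $r(R)=1$ and feeding the hypothesis into the main theorem---matches the paper's. The execution differs in which intermediate results carry the load. The paper routes both directions through Corollary \ref{C3.6}: the hypothesis gives $\calN(\q;R)=1\le r_d(R)$, which is condition (4) there, so $R$ is Cohen--Macaulay, and then the index being eventually $r_d(R)$ forces $r_d(R)=1$; the Cohen--Macaulayness inside that corollary is extracted from $r(M)=r_d(M)$, i.e.\ from $r_i(M)=0$ and hence $\H^i_\fkm(M)=0$ for $i<d$. You instead apply Theorem \ref{0} directly, pin down $r(R)=1$ via Corollary \ref{Cmain}, and kill the lower strata of the dimension filtration using the decomposition $\H^{d_i}_\fkm(M)\cong\H^{d_i}_\fkm(C_i)$ for sequentially Cohen--Macaulay modules---a standard consequence of Schenzel's results that the paper never states explicitly but which is legitimate to import. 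The two mechanisms are interchangeable here, and each costs about two lines. Two small remarks: your bound $r(M)\ge\ell+1$ presupposes the introduction's indexing of the dimension filtration (where $D_0\ne(0)$); under the convention of Section 2 (where $D_0=(0)$) it reads $r(M)\ge\ell$ and forces $\ell=1$, with the same conclusion. Also, Corollary \ref{Cmain} is avoidable: since $r_d(R)\ge1$ gives $\calN(\q;R)=1\le r(R)$, the hypothesis already realizes condition (3) of Theorem \ref{0}, whence $R$ is sequentially Cohen--Macaulay, and then $(1)\Rightarrow(2)$ of that theorem yields $r(R)=1$.
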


Let us explain how this paper is organized. This paper is divided into 3 sections. We shall prove Theorem \ref{0} and our corollaries  in Section 3. The notion of sequentially Cohen-Macaulay module was introduced  by R. Stanley \cite{St} in graded case, and the local case was studied by \cite{Sch}. Our Theorem \ref{0} partially covers a main result in \cite[Theorem 5.2]{CQT} .
In our argument Goto sequence of type II play an important role. In Section 2 let us briefly note the existence of Goto sequence of type II.

\medskip

\section{Goto sequences}

Throughout this paper let  $R$  be a commutative Noetherian local  ring with  maximal ideal $\m$. Let $M$ be a finitely generated $R$-module of dimension $d >0$. We put $$\Assh_RM=\{\fkp \in \Supp_RM \mid \dim R/\fkp=d\}.$$ Then $$\Assh_RM \subseteq \Min_RM \subseteq \Ass_RM.$$
Let $\Lambda (M)=\{\dim_R L \mid L \ \text{is an}\ R\text{-submodule of}\ M, L \ne (0)\}.$ We then have $$\Lambda (M)=\{\dim R/\fkp \mid \fkp \in\Ass_RM\}.$$ 
We put  $\ell=\sharp \Lambda (M)$ and number the elements  $\{d_i\}_{1\leq i\leq \ell}$ of $\Lambda (M)$  
so that  $$0 \le d_1<d_2<\cdots<d_{\ell}=d.$$
Then because the base ring $R$ is Noetherian, 
for each $1\leq i\leq \ell$ the $R$-module $M$ contains 
the largest $R$-submodule $D_i$ with $\dim_RD_i=d_i$. 
Therefore, letting $D_0=(0)$, we have the filtration 

$$\calD : D_0=(0)\subsetneq D_1\subsetneq D_2\subsetneq \cdots 
\subsetneq D_{\ell}=M$$
of $R$-submodules of $M$, which we call the dimension filtration of $M$. 
The notion of dimension filtration was firstly given 
by P. Schenzel \cite{Sch}. 
Our notion of dimension filtration is a little different from that of \cite{CC, Sch},
but throughout  this paper let us utilize the above definition. It is standard to check that $\{D_j\}_{0\leq j\leq i}$ 
(resp. $\{D_j/D_i\}_{i\leq j\leq \ell}$) 
is the dimension filtration of $D_i$ (resp. $M/D_i$) 
for every $1\leq i\leq \ell$. We put   $C_i=D_i/D_{i-1}$ for $1 \le i \le \ell$.

We note two characterizations of the dimension filtration. Let $$(0)=\bigcap_{\fkp \in \Ass_RM}M(\fkp)$$ be a primary decomposition
of $(0)$ in $M$, where $M(\fkp)$ is an $R$-submodule of $M$ 
with $\Ass_RM/M(\fkp)=\{ \fkp \}$ for each $\fkp \in \Ass_RM$. Then the submodule
$D_{\ell-1} =\bigcap\limits_{\fkp\in\Assh(M)}M(\fkp)$
is called the unmixed component of $M$.
We then have the following.

\begin{pro}[{\cite[Proposition 2.2, Corollary 2.3]{Sch}}]\label{d1}
The following assertions hold true. 
\begin{enumerate}
\item[$(1)$] $D_i=\bigcap_{\fkp \in \Ass_RM,\ \dim R/\fkp \geq d_{i+1}}M(\fkp)$
for all $0\leq i < \ell$. 
\item[$(2)$] Let $1\leq i \leq \ell$. Then 
$\Ass_RC_i=\{ \fkp \in \Ass_RM \mid \dim R/\fkp =d_i\}$ and
$\Ass_RD_i=\{ \fkp \in \Ass_RM \mid \dim R/\fkp \leq d_i\}$. 
\item[$(3)$] $\Ass_RM/D_i=\{\fkp \in \Ass_RM \mid \dim R/\fkp 
\geq d_{i+1} \}$ for all $1\leq i < \ell$. 
\end{enumerate}
\end{pro}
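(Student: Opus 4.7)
The plan is to prove part (1) first by a direct double inclusion, and then to derive (2) and (3) by routine manipulations with associated primes. Write $N_i := \bigcap_{\fkp \in \Ass_R M,\ \dim R/\fkp \ge d_{i+1}} M(\fkp)$ for the proposed description of $D_i$. The only substantive input needed beyond standard primary decomposition is the maximality in the definition of $D_i$ together with the fact that a nonzero submodule of a $\fkp$-coprimary module has dimension at least $\dim R/\fkp$.

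For $N_i \subseteq D_i$, since $(0) = \bigcap_{\fkp \in \Ass_R M} M(\fkp)$, intersecting $N_i$ with the complementary primary factors still gives $(0)$, which produces an embedding $N_i \hookrightarrow \bigoplus_{\fkp \in \Ass_R M,\ \dim R/\fkp \le d_i} M/M(\fkp)$. Each summand is $\fkp$-coprimary, so $\Ass_R N_i$ lies in the indexing set and hence $\dim_R N_i \le d_i$; maximality of $D_i$ then yields $N_i \subseteq D_i$. For the reverse, fix $\fkp \in \Ass_R M$ with $\dim R/\fkp \ge d_{i+1}$ and let $L$ be the image of $D_i$ in $M/M(\fkp)$; if $L \ne 0$, then $\Ass_R L \subseteq \Ass_R M/M(\fkp) = \{\fkp\}$ forces $\fkp \in \Ass_R L$ and therefore $\dim L \ge d_{i+1} > d_i \ge \dim_R D_i$, which is absurd, so $D_i \subseteq M(\fkp)$ for every such $\fkp$. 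This proves (1).

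The $\Ass_R D_i$ statement in (2) is then at hand: the inclusion $\Ass_R D_i \subseteq \{\fkp \in \Ass_R M \mid \dim R/\fkp \le d_i\}$ was shown above, and the reverse follows from maximality of $D_i$, since any $\fkp \in \Ass_R M$ with $\dim R/\fkp \le d_i$ gives an embedded copy of $R/\fkp$ in $M$ of dimension $\le d_i$, which must lie in $D_i$. Part (3) drops out from the embedding $M/D_i \hookrightarrow \bigoplus_{\fkp:\ \dim R/\fkp \ge d_{i+1}} M/M(\fkp)$ supplied by (1), combined with the standard containment $\Ass_R M \subseteq \Ass_R D_i \cup \Ass_R M/D_i$ coming from $0 \to D_i \to M \to M/D_i \to 0$ together with the $\Ass_R D_i$ just computed.

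Finally, for $\Ass_R C_i$, apply (1) to the submodule $D_i$, whose dimension filtration is $\{D_j\}_{0 \le j \le i}$ as noted in the paper, at level $i-1$: this writes $D_{i-1}$ as the intersection of $\fkp$-primary submodules of $D_i$ for $\fkp \in \Ass_R D_i$ with $\dim R/\fkp \ge d_i$, which, using the $\Ass_R D_i$ just computed, reduces to $\dim R/\fkp = d_i$. Hence $C_i = D_i/D_{i-1}$ embeds in the corresponding direct sum of $\fkp$-coprimary modules and $\Ass_R C_i \subseteq \{\fkp \in \Ass_R M \mid \dim R/\fkp = d_i\}$, while the reverse inclusion is the formal fact $\Ass_R D_i \setminus \Ass_R D_{i-1} \subseteq \Ass_R C_i$ applied to $0 \to D_{i-1} \to D_i \to C_i \to 0$. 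The only real obstacle in the whole argument is the dimension bound used in the second half of (1); once that is in place, everything else is associated-prime bookkeeping.
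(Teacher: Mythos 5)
Your proof is correct. Note that the paper itself offers no proof of this proposition --- it is quoted verbatim from Schenzel with a citation --- so there is nothing internal to compare against; your argument is the standard one and all the steps check out. In particular, the two points that actually carry weight are handled properly: the dimension bound $\dim L \geq \dim R/\fkp \geq d_{i+1}$ for a nonzero submodule $L$ of the $\fkp$-coprimary module $M/M(\fkp)$, which forces $D_i \subseteq M(\fkp)$, and the observation that maximality of $D_i$ absorbs any submodule of dimension $\leq d_i$ (since $\dim(N+D_i)=\max\{\dim N,\dim D_i\}$). The derivation of (2) and (3) from (1) via the exact sequences $0 \to D_{i-1} \to D_i \to C_i \to 0$ and $0 \to D_i \to M \to M/D_i \to 0$, together with the fact that $\{D_j\}_{0\le j\le i}$ is the dimension filtration of $D_i$, is routine and correctly executed.
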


%%%%%%%%%%%%%%%%%%%%%%%%%%%%%%%%%%%%%%%%%%%%%

We now assume that $R$ is a local ring with maximal ideal $\fkm$ and let $M$ be a finitely generated $R$-module with $d = \dim_RM \ge 1$ and $\calD =\{D_i\}_{0 \le i \le \ell}$ the dimension filtration. Let $\x=x_1,x_2, \ldots, x_d$ be a system of parameters of $M$. Then $\x$ is said to be %{\it good} (resp. 
{\it distinguished}, if  
$$  (x_j \mid d_{i} < j \le d) D_i=(0)$$
%$$(x_j \mid d_{i} < j \le d)M \cap D_i=(0) \ \ \text{(resp.}\ \  (x_j \mid d_{i} < j \le d) D_i=(0))$$ 
for all $1\le i\le \ell$, where $d_i=\dim_R D_i$. A parameter ideal $\fkq$ of $M$ is called  %{\it good} (resp. 
{\it distinguished}, if there exists a %good (resp. 
distinguished system $x_1,x_2, \ldots, x_d$ of parameters of $M$ such that $\fkq=(x_1,x_2, \ldots, x_d)$. Therefore, if $M$ is a Cohen-Macaulay $R$-module, every parameter ideal of $M$ is distinguished. 
%Notice that good parameter ideals are distinguished in due course. The converse is also true, if $M$ is a sequentially Cohen-Macaulay $R$-module  (\cite{CC}).
 Distinguished system of parameters exist %(\cite[Lemma 2.4]{CC}) 
and if $x_1, x_2, \ldots, x_d$ is a distinguished system of parameters of $M$, then $x_1^{n_1},x_2^{n_2}, \ldots, x_d^{n_d}$ is also a distinguished system of parameters of $M$ for all integers $n_j \ge 1$. 

%%%%%%%%%%%%%%%%%%%%%%%%%%%%%%%%%%%%%%%%%%%%%

\medskip
\begin{setting}\label{2.6}{\rm
Let $\x=x_1,x_2,\ldots,x_s$ be a system  of elements of $R$ and $\q_j$ denote the ideal generated by $x_1,\ldots,x_j$ for all $j=1,\ldots,s$.   }
\end{setting}

\begin{dfn}\rm
A system $\x$ of elements of $R$ is called {\it Goto sequence} on $M$, if   for all $0\le j\le s-1$ and $0\le i\le\ell$, we have the following
\begin{enumerate}
\item[$(1)$] $\Ass(C_i/\q_j C_i)\subseteq \Assh(C_i/\q_j C_i)\cup \{\fkm\}$,
\item[$(2)$] $x_jD_i=0$ if $d_i<j\le d_{i+1}$,
\item[$(3)$] $\q_{j-1}:x_j=\H^0_\m(M/\q_{j-1}M) \text{ and } x_j\not\in\fkp \text{ for all } \fkp\in\Ass(M/\q_{j-1}M)-\{\fkm\}$.
%\item[$(4)$] $r_{d-j}(M/\q_jM)\le r_{d-j-1}(M/\q_{j+1}M)$.
\end{enumerate}

\end{dfn}

%%%%%%%%%%%%%%%%%%%%%%%%%%%%%%%%%%%%%%%%%%%%%
At first glance, the definition of normal does not seem very intuitive. Once we
enter the world of sequences, however, we will see that Goto sequence has a very
nice  interpretation and properties. We will also see that Goto sequence is useful for many inductive proofs in the next sections.
Before we can give some properties of this sequence, we first need reformulate the notion of $d$-sequences. The sequence $x_1,x_2,\ldots,x_s$ of elements of $R$ is called a
$d$-sequence on $M$ if
$$\q_i M:x_{i+1}x_j=\q_i M:x_j$$
for all $0\le i<j\le s$. The concept of a $d$-sequence is given by Huneke  and it plays an important role in the theory of Blow up
algebra, e.g. Ress algebra.  In the following lemma, we will give some properties of Goto sequences that will
be used in the next sections when we study the index of reducibility and the Cohen-Macaulayness of local rings.

\medskip

\begin{lem}\label{property}
Let $\x=x_1,x_2,\ldots,x_s$ form a Goto sequence on $M$. Then we have
\begin{enumerate}
\item[$(1)$] $\x$ is part of a system of parameters of $M$. 
\item[$(2)$] $\x$ is a $d$-sequence.
\item[$(3)$] If $d=s$ then $\x$ is a distinguished system of parameters of $M$.
%\item[$(4)$] Let $N$ denote the unmixed component of $M/\q_{d-2}M$. If $M/N$ is Cohen-Macaulay, so is also $M/D_{\ell-1}$. 
\end{enumerate}

\end{lem}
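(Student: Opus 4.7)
Parts $(1)$ and $(3)$ are relatively direct. For $(1)$, I induct on $j$ to show $\dim M/\q_j M = d-j$: condition $(3)$ gives $x_{j+1} \notin \fkp$ for every $\fkp \in \Ass(M/\q_j M) \setminus \{\m\}$, and since $\dim R/\m = 0 < d-j$ while the induction proceeds, $\m \notin \Assh(M/\q_j M)$, so $x_{j+1}$ avoids every top-dimensional minimal prime of $M/\q_j M$ and reduces the dimension by one. For $(3)$, once $(1)$ gives a system of parameters and $s=d$, verifying $(x_k\mid d_i<k\le d)D_i=0$ amounts to fixing $k > d_i$, picking $i' \ge i$ with $d_{i'} < k \le d_{i'+1}$, applying condition $(2)$ to get $x_k D_{i'} = 0$, and concluding $x_k D_i = 0$ via $D_i \subseteq D_{i'}$.

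The core is $(2)$: the identity $\q_i M :_M x_{i+1} x_j = \q_i M :_M x_j$ for $0 \le i < j \le s$. Setting $N = M/\q_i M$ and $T = \H^0_\m(N)$, the inclusion $\supseteq$ is trivial, while the converse requires $x_j N \cap T = 0$: if $x_{i+1} x_j m \in \q_i M$ then $x_{i+1} \cdot \overline{x_j m} = 0$ in $N$, whence $\overline{x_j m} \in T$ by condition $(3)$, and one wants $\overline{x_j m} = 0$. The case $j = i+1$ is clean: condition $(3)$ supplies both $x_{i+1} T = 0$ (since $T = (0:_N x_{i+1})$) and the non-zerodivisor property of $x_{i+1}$ on $N/T$, and these two ingredients together force $x_{i+1} N \cap T = 0$.

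The general case $j > i+1$ is the main obstacle. One needs the analogues of both ingredients for $x_j$: that $x_j$ annihilates $T$, and that $x_j$ is a non-zerodivisor on $N/T$. Neither is directly furnished by condition $(3)$, which provides filter-regularity of $x_j$ only relative to $\q_{j-1}$, not relative to the smaller ideal $\q_i$. I would recover both via the induced filtration $\bar D_k := (D_k + \q_i M)/\q_i M$ on $N$: condition $(1)$ forces the associated primes of each subquotient $C_k/\q_i C_k$ into $\Assh \cup \{\m\}$, condition $(2)$ ensures $x_j$ annihilates each $D_k$ with $d_k < j$, and iterated application of $(3)$ along $\q_i \subset \q_{i+1} \subset \cdots \subset \q_{j-1}$ yields $x_j \notin \fkp$ for every $\fkp \in \Ass(N) \setminus \{\m\}$ together with $x_j T = 0$. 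This filtration-based bookkeeping of associated primes and $\m$-torsion is the technical crux.
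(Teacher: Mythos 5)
Your arguments for parts $(1)$ and $(3)$ are correct and simply flesh out what the paper dismisses as ``an immediate consequence of the definitions.'' For part $(2)$, however, you have taken a genuinely different route from the paper, and your route has a real gap. The paper does not prove $(2)$ at all: it cites condition $(vii)$ of Trung's theorem on absolutely superficial sequences (\cite[Theorem 1.1]{T}), so the entire technical burden is outsourced to that reference. Your direct attack is sound up to and including the case $j=i+1$: the reduction of $\q_iM:x_{i+1}x_j=\q_iM:x_j$ to the statement $x_jN\cap T=0$ with $N=M/\q_iM$, $T=\H^0_\m(N)$ is correct, and for $j=i+1$ condition $(3)$ does give both $x_{i+1}T=0$ and $(0:_Nx_{i+1}^k)=T$, which together close the argument.

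The case $j>i+1$ is where your proof stops being a proof. You need $x_jT=0$ and a bound such as $(0:_Nx_j^k)\subseteq T$, and you propose to extract both from ``iterated application of $(3)$'' along $\q_i\subset\cdots\subset\q_{j-1}$ together with the induced filtration $\bar D_k$. Neither extraction is substantiated. Condition $(3)$ controls $x_j$ only modulo $\q_{j-1}$, and there is no containment $\Ass(M/\q_iM)\subseteq\Ass(M/\q_{j-1}M)\cup\{\m\}$ --- on the contrary, the top-dimensional primes of $M/\q_iM$ have dimension $d-i$ while those of $M/\q_{j-1}M$ have dimension $d-j+1$, so the associated primes of the two modules are essentially disjoint and filter-regularity of $x_j$ on $M/\q_{j-1}M$ says nothing about $\Ass(M/\q_iM)$. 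Likewise $T=\H^0_\m(M/\q_iM)$ is not $(0:_?x_j)$ of anything condition $(3)$ speaks about, so $x_jT=0$ is unproven. The filtration bookkeeping does not rescue this: the subquotients $\bar D_k/\bar D_{k-1}$ are only \emph{quotients} of $C_k/\q_iC_k$, and associated primes are not preserved under passing to quotients, so condition $(1)$ does not confine $\Ass(\bar D_k/\bar D_{k-1})$ to $\Assh\cup\{\m\}$ as you claim. You have correctly located the crux, but you have not crossed it; to complete part $(2)$ you would either have to prove these statements (which amounts to reproving the relevant implication of Trung's theorem) or simply invoke \cite[Theorem 1.1]{T} as the paper does.
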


\begin{proof}
As an immediate consequence of the definitions we have the first assertion and the thrid assertion. The second assertion is followed from $(vii)$ of \cite[Theorem 1.1]{T}.

\end{proof}

\begin{lem}\label{property1}	
Let $R$ be a homomorphic image of a Cohen–Macaulay
local ring. Assume that system $\x=x_1,x_2,\ldots,x_d$ of parameters form a Goto sequence on $M$. Let $N$ denote the unmixed component of $M/\q_{d-2}M$ and $d\ge 2$. If $M/N$ is Cohen-Macaulay, so is also $M/D_{\ell-1}$.

\end{lem}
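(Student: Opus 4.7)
The plan is to prove $L := M/D_{\ell-1}$ is Cohen--Macaulay of dimension $d$ by first identifying $L/\q_{d-2}L$ with the Cohen--Macaulay module $M/N$, and then verifying that $x_1, \ldots, x_{d-2}$ is an $L$-regular sequence. Throughout, write $M' := M/\q_{d-2}M$ (of dimension $2$) and $N' \subseteq M$ for the preimage of $N$.

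First I would show the inclusion $D_{\ell-1} + \q_{d-2}M \subseteq N'$. By Lemma~\ref{property}, the Goto sequence $\x$ is a distinguished system of parameters of $M$, so $x_j D_{\ell-1} = 0$ for every $j$ with $d_{\ell-1} < j \le d$. Consequently $\q_{d-2}D_{\ell-1}$ is generated by at most the first $\min(d-2, d_{\ell-1})$ elements of $\x$, which form part of a system of parameters of $D_{\ell-1}$. Hence $\dim(D_{\ell-1}/\q_{d-2}D_{\ell-1}) \le 1$, and the image of $D_{\ell-1}$ in $M'$, being a quotient of this module, has dimension at most $1 < \dim M' = 2$ and therefore lies in the unmixed component $N$.

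The crucial step is the reverse inclusion, equivalent to the vanishing of $K := N'/(D_{\ell-1}+\q_{d-2}M)$, the kernel of the induced surjection $L/\q_{d-2}L \twoheadrightarrow M/N$. Since $K$ embeds into $N$ we have $\dim K \le 1$, and Goto condition~(1) applied to $C_\ell = L$ yields $\Ass(L/\q_{d-2}L) \subseteq \Assh(L/\q_{d-2}L) \cup \{\m\}$, which together force $\Ass(K) \subseteq \{\m\}$. To force the full vanishing $K = 0$ I would combine Goto condition~(3) at $j = d-1$, namely $\q_{d-2}M :_M x_{d-1} = \H^0_\m(M')$, with the Cohen--Macaulay hypothesis on $M/N$ which gives $\H^0_\m(M'/N) = 0$ and hence $\H^0_\m(M') \subseteq N$. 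A chase through the long exact sequence of local cohomology for $0 \to K \to L/\q_{d-2}L \to M/N \to 0$ should then locate any putative element of $K$ inside $\q_{d-2}M :_M x_{d-1}$, which by the distinguished structure already lies in $D_{\ell-1}+\q_{d-2}M$.

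With $K = 0$ established, $L/\q_{d-2}L \cong M/N$ is Cohen--Macaulay of dimension~$2$. Since $\Ass(L) = \Assh(M)$ consists only of primes of dimension $d$, the element $x_1$ is $L$-regular; combining this with Goto condition~(1) on $C_\ell = L$ and the established Cohen--Macaulayness of $L/\q_{d-2}L$, a reverse induction on $j$ excludes $\m$ from $\Ass(L/\q_{j-1}L)$ for every $1 \le j \le d-2$, showing that $x_1, \ldots, x_{d-2}$ is a regular sequence on $L$. Consequently $\depth L \ge (d-2)+\depth(L/\q_{d-2}L) = d = \dim L$, so $L = M/D_{\ell-1}$ is Cohen--Macaulay. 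The main obstacle is clearly the vanishing $K = 0$: while Goto condition~(1) easily bounds $\Ass(K) \subseteq \{\m\}$, extracting actual vanishing demands a careful orchestration of Goto condition~(3), the Cohen--Macaulay hypothesis on $M/N$, and the local cohomology sequence relating $L/\q_{d-2}L$ to $M/N$.
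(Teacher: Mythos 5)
Your overall architecture differs from the paper's: you try to establish the single isomorphism $L/\q_{d-2}L \cong M/N$ in one shot and then lift Cohen--Macaulayness back up $d-2$ steps, whereas the paper inducts along the Goto sequence one element at a time, at each stage re-identifying the unmixed component of the quotient with the image of $D_{\ell-1}$. Your forward inclusion $D_{\ell-1}+\q_{d-2}M \subseteq N'$ and the finite-length bound $\Ass(K)\subseteq\{\m\}$ coming from Goto condition (1) are fine, and the concluding regular-sequence step (descending induction using $\H^0_\m(L/\q_jL)=0 \Rightarrow \H^0_\m(L/\q_{j-1}L)=0$) can be made to work. The genuine gap is exactly where you flag it: the vanishing $K=0$. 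Your proposed mechanism is circular. Goto condition (3) at $j=d-1$ says $\q_{d-2}M:_Mx_{d-1}$ is the preimage of $\H^0_\m(M/\q_{d-2}M)$, and since $\H^0_\m(M/\q_{d-2}M)\subseteq N$ this colon ideal is contained in $N'$; the further claim that it ``by the distinguished structure already lies in $D_{\ell-1}+\q_{d-2}M$'' is precisely (a piece of) the inclusion $N'\subseteq D_{\ell-1}+\q_{d-2}M$ you are trying to prove. The distinguished property only gives $x_jD_{\ell-1}=0$, i.e.\ it places $D_{\ell-1}$ \emph{inside} colon ideals, and provides no containment of $\q_{d-2}M:_Mx_{d-1}$ in $D_{\ell-1}+\q_{d-2}M$. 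After your reductions one has $K=\H^0_\m(L/\q_{d-2}L)$, and nothing in your chase forces this socle-supported module to vanish.

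The paper closes this gap by a different and essentially unavoidable device: working with a single element $x$ at a time, it uses that $x$ is regular on $M/D_{\ell-1}$ together with the long exact sequence of local cohomology for
$$0 \to M/D_{\ell-1}\overset{\cdot x}\to M/D_{\ell-1} \to M/(D_{\ell-1}+xM)\to 0.$$
The Cohen--Macaulay hypothesis on $\overline M/N$, combined with the finite length of $N/\overline{D_{\ell-1}}$ (Goto condition (1)), kills $\H^i_\m(M/(D_{\ell-1}+xM))$ for $0<i<d-1$, so multiplication by $x$ is surjective on $\H^1_\m(M/D_{\ell-1})$; the paper concludes $\H^1_\m(M/D_{\ell-1})=0$ and hence $N/\overline{D_{\ell-1}}=\H^0_\m(M/(D_{\ell-1}+xM))=0$, identifying $N$ with $\overline{D_{\ell-1}}$ at that stage before descending further. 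If you want to keep your formulation, you would need to reproduce this inductive identification of the unmixed component at every intermediate quotient $M/\q_jM$; the one-step local cohomology chase at level $d-2$, using only Goto condition (3), does not supply the vanishing.
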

\begin{proof}

We may assume that $d \ge 3$. Because the assumption of the corollary
is inherited to the module $M/\q_iM$, it is enough to prove the following
statement.

Let $x \in R$ be a Goto sequence of length one on $M$. Let $N$ denote the unmixed component of $M/x M$ and $d \ge 3$.
If $\H^i_\m(M/N)=0$ for $i\le d-2$ then $\H^i_\m(C_{\ell})=0$ for $i\le d-1$.

For a submodule $N$ of M, we denote $\overline N=(N+xM)/xM$ the submodule of $M/xM$. Since $x$ is a Goto sequence of length one on $M$,  $\Ass(C_\ell/xC_\ell)\subseteq \Assh(C_\ell/xC_\ell)\cup\{m\}$. Therefore $N/\overline D_{\ell-1}$ has a finite length. Since $\overline M/N$ is a Cohen-Macaulay module, $H^i_\fkm(M/D_{\ell-1}+xM)=0$ for all $0<i<d-1$. Therefore, we derive from the exact sequence
$$0 \to M/D_{\ell-1}\overset{.x}\to M/D_{\ell-1} \to  M/D_{\ell-1}+xM \to 0$$
the following exact sequence:
$$0 \to \H^0_\m(M/D_{\ell-1} + xM) \to \H^1_\m(M/D_{\ell-1})
\overset{.x}\to \H^1_\m(M/D_{\ell-1}) \to 0.$$
Thus  $\H^1_\fkm(M/D_{\ell-1})=0$, and so $N/\overline D_{\ell-1}=\H^0_\fkm(M/D_{\ell-1}+xM)=0$. Hence $N=\overline D_{\ell-1}$. Moreover, since $x$ is  $C_\ell=M/D_{\ell-1}$-regular and  $C_\ell/xC_\ell\cong \overline M/\overline D_{\ell-1} = \overline M/N$ a Cohen-Macaulay module, $C_\ell$  is a Cohen-Macaulay module.
\end{proof}

We now denote $r_j(M)=\ell_R((0):_{\H^j_\fkm(M)}\fkm)$ for all $j\in \Bbb Z$ and 
$$r(M)=\sum\limits_{j\in\Bbb Z}r_j(M).$$

\begin{dfn}\rm
A system $\x$ of elements of $R$ is called {\it Goto sequence of type II} on $M$, if we have   
 $$r(M/\q_jM)\le r(M/\q_{j+1}M),$$ 
for all $0\le j\le s-1$.

\end{dfn}

Now, we explore the existence of Goto sequence of type II. We have divided the proof of the existence of Goto sequence of type II into sequence of lemmas. First, we begin with the following result of S. Goto and Y. Nakamura  \cite{GN}.% is often used  in this section.
 
\begin{lem}{\cite{GN}}\label{finitely}
Let $R$ be a homomorphic image of a Cohen-Macaulay local
ring and assume that $\Ass(R)\subseteq \Assh(R)\cup \{\fkm\}$. Then
$$\calF=\{\fkp\in\Spec(R)\mid\height_R(\fkp)> 1=\depth(R_\fkp)\}$$
is a finite set.
\end{lem}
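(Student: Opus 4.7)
The plan is to place every prime in $\calF$ inside the set of associated primes of a single finitely generated $R$-module, namely the cokernel of the $S_2$-ification of $R$, and then invoke finiteness of associated primes. First I would reduce to the unmixed case by setting $N=\H^0_\fkm(R)$ and $\bar R=R/N$: for every non-maximal prime $\fkp$ one has $R_\fkp=\bar R_\fkp$, so $\calF(R)$ and $\calF(\bar R)$ differ by at most the single prime $\fkm$ (and $\fkm\in\calF(R)$ forces $\depth R=1$, in which case $N=0$ and $R=\bar R$ anyway). Since by the hypothesis $\Ass\bar R=\Assh R$, we may replace $R$ by $\bar R$ and assume throughout that $R$ is equidimensional and unmixed, i.e.\ $\Ass R=\Assh R$.

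Writing $R=S/J$ with $S$ Cohen-Macaulay local of dimension $n$, the canonical module $\omega_R=\Ext^{n-d}_S(R,S)$ (where $d=\dim R$) then exists and, because $R$ is equidimensional unmixed, is a faithful $(S_2)$-module of dimension $d$. The natural evaluation map
$$R \longrightarrow \tilde R := \Hom_R(\omega_R,\omega_R)$$
is therefore injective, $\tilde R$ is a finitely generated $R$-module which satisfies $(S_2)$ as an $R$-module, and the cokernel $C:=\tilde R/R$ is a finitely generated $R$-module. This is the standard $S_2$-ification construction (see e.g.\ Aoyama, or Bruns--Herzog).

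Now fix $\fkp\in\calF$, so $\height\fkp\ge 2$ and $\depth R_\fkp=1$. Because $\tilde R$ is $(S_2)$ and $\height\fkp\ge 2$, we have $\depth\tilde R_\fkp\ge 2$, so that $\Hom_{R_\fkp}(k(\fkp),\tilde R_\fkp)=0=\Ext^1_{R_\fkp}(k(\fkp),\tilde R_\fkp)$. Applying $\Hom_{R_\fkp}(k(\fkp),-)$ to the short exact sequence $0\to R_\fkp\to\tilde R_\fkp\to C_\fkp\to 0$ then yields
$$\Hom_{R_\fkp}(k(\fkp),C_\fkp)\ \cong\ \Ext^1_{R_\fkp}(k(\fkp),R_\fkp)\ \ne\ 0,$$
the last non-vanishing precisely because $\depth R_\fkp=1$. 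Hence $\fkp\in\Ass_R C$, and since $C$ is finitely generated, $\Ass_R C$ is a finite set; so is $\calF$.

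The main obstacle is the $S_2$-ification input in the second step: one must know that the canonical module $\omega_R$ is faithful and $(S_2)$, and that $\Hom_R(\omega_R,\omega_R)$ is again $(S_2)$ as an $R$-module with the inclusion $R\hookrightarrow\tilde R$ having finitely generated cokernel. Both facts rest on the unmixedness obtained in the reduction and on the availability of a dualizing complex, guaranteed by the Cohen-Macaulay-homomorphic-image hypothesis. Once these standard properties are accepted, the remaining steps are a purely formal depth chase on the short exact sequence $0\to R\to\tilde R\to C\to 0$.
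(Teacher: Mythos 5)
First, a point of order: the paper does not prove this lemma at all --- it is quoted verbatim from Goto--Nakamura \cite{GN} and used as a black box, so there is no in-paper argument to compare yours against. Judged on its own terms, the skeleton of your proof is the standard one and most of it is sound: killing $N=\H^0_\fkm(R)$ changes nothing at non-maximal primes (and $N$ is nilpotent, so heights are preserved), and once one has a finite injection $R\hookrightarrow\tilde R$ with $\tilde R$ satisfying $(S_2)$, the identification $\Hom_{R_\fkp}(k(\fkp),C_\fkp)\cong\Ext^1_{R_\fkp}(k(\fkp),R_\fkp)$ correctly places every $\fkp\in\calF$ inside the finite set $\Ass_R C$.

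The genuine gap is precisely the input you flag as the ``main obstacle'' and then wave through. You claim the canonical module $\omega_R=\Ext^{n-d}_S(R,S)$ with its Aoyama properties (faithfulness over unmixed $R$, $(S_2)$, and $\Hom_R(\omega_R,\omega_R)$ being a finite $(S_2)$-ification) is ``guaranteed by the Cohen-Macaulay-homomorphic-image hypothesis'' via a dualizing complex. That is not true: a ring admits a dualizing complex iff it is a homomorphic image of a finite-dimensional \emph{Gorenstein} ring (Kawasaki), and by Reiten--Foxby--Sharp a Cohen-Macaulay local ring possesses a canonical module iff it is such an image --- there exist Cohen-Macaulay local rings which are not, and any such ring trivially satisfies the hypothesis of the lemma. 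With $S$ merely Cohen-Macaulay, $\Ext^{n-d}_S(R,S)$ need not be a canonical module, and the duality arguments behind Aoyama's theorems are unavailable. Part of what you need can be recovered by hand: localizing and factoring out a maximal $S_{\fkP}$-regular sequence inside $J_{\fkP}$ identifies $\Ext^{n-d}_S(R,S)_\fkp$ with $\Hom_{\bar A}(R_\fkp,\bar A)$ over a Cohen-Macaulay ring $\bar A$ of dimension $\height\fkp$, which yields the depth-$\ge 2$ statement at primes of height $\ge 2$. But the injectivity of $R\to\Hom_R(\omega_R,\omega_R)$ for unmixed, possibly non-reduced $R$ (you only get that the kernel is nilpotent) is exactly where duality is used, and your argument does not supply it. The usual repair --- complete, so that $\widehat R$ is a quotient of a regular local ring, prove finiteness of $\calF(\widehat R)$, and contract along minimal primes of $\fkp\widehat R$ (flat local base change preserves the relevant heights and depths) --- also needs a word of justification, since the hypothesis $\Ass\subseteq\Assh\cup\{\fkm\}$ and unmixedness need not pass to $\widehat R$ when $R$ is not excellent. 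As written, the proof has the right shape but its key structural input is not established under the stated hypothesis.
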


The next proposition shows the existence of a special  element which is useful for the existence of Goto sequence. %and many inductive proofs in the sequel.

\begin{prop}\label{exists element}
Let $R$ be a homomorphic image of a Cohen-Macaulay
local ring and $I$  an $\fkm$-primary ideal of $R$. Assume that $\calF=\{M_i\}_{i=0}^{\ell}$ is a finite filtration of submodules of $M$ such that $\Ass L_i\subseteq \Assh L_i\cup\{\fkm\}$, where $L_i=M_i/M_{i-1}$.  Then there exists an element $x\in I$ 
%such that $(0):_{L_i}x=\H^0_\m(L_i)$, $(0):_Mx=\H^0_\m(M)$ and $\Ass(L_i/x^n L_i)\subseteq \Assh(L_i/x^n L_i)\cup \{\fkm\}$ for all $i=0,\ldots,\ell-1$, and $x\not\in \fkp$ for all $\fkp\in\Ass(M)-\{\fkm\}$.
satisfies the following conditions
%\begin{enumerate}
%\item[$(1)$] For all $i=0,\ldots,\ell-1$,  $\Ass(L_i/x^n L_i)\subseteq \Assh(L_i/x^n L_i)\cup \{\fkm\}$. 
%\item[$(2)$] For all $\fkp\in\Ass(M)-\{\fkm\}$, $x\not\in \fkp$. 
%\item[$(3)$] For all $i=0,\ldots,\ell-1$, $(0):_{L_i}x=\H^0_\m(L_i)$, $(0):_Mx=\H^0_\m(M)$.
%\end{enumerate}
\begin{enumerate}
\item[$(1)$]   $\Ass(L_i/x^n L_i)\subseteq \Assh(L_i/x^n L_i)\cup \{\fkm\}$, For all $i=0,\ldots,\ell-1$.
\item[$(2)$] $x\not\in \fkp$,  for all $\fkp\in\Ass(M)-\{\fkm\}$.
\item[$(3)$]  $(0):_{L_i}x=\H^0_\m(L_i)$ and $(0):_Mx=\H^0_\m(M)$, for all $i=0,\ldots,\ell-1$,
\end{enumerate}

\end{prop}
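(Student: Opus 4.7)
The plan is to use prime avoidance: I will pick $x$ from an appropriate ideal contained in $I$ while avoiding a finite list of non-maximal primes, then verify each of (1)--(3) is forced.

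First I would pin down a common annihilation constraint. Since each of the finitely many modules $\H^0_\fkm(L_i)$ and $\H^0_\fkm(M)$ has finite length, there is an integer $N$ with $\fkm^N\H^0_\fkm(L_i)=0=\fkm^N\H^0_\fkm(M)$ for every $i$. I would then work inside the $\fkm$-primary ideal $J=I\cap\fkm^N$. Next I would assemble the finite set $\calP$ of non-maximal primes to be avoided, consisting of $\Ass(M)\setminus\{\fkm\}$ (which will handle (2) and the $M$-part of (3)), the sets $\Ass(L_i)\setminus\{\fkm\}$ for each $i$ (which handles the $L_i$-part of (3) and provides the non-zero-divisor step in (1)), and the sets $\calG_i=\{\fkp\in\Supp L_i:\fkp\ne\fkm,\ \depth(L_i)_\fkp=1,\ \dim(L_i)_\fkp\ge 2\}$. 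The finiteness of each $\calG_i$ is the crucial input and should follow by applying Lemma~\ref{finitely} in its natural module form to $L_i$, using that $R$ is a homomorphic image of a Cohen--Macaulay local ring and that $\Ass L_i\subseteq\Assh L_i\cup\{\fkm\}$. Because every $\fkp\in\calP$ is strictly smaller than $\fkm$ and $J$ is $\fkm$-primary, prime avoidance yields $x\in J$ outside $\bigcup_{\fkp\in\calP}\fkp$.

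Then I would check the three conclusions. Condition (2) is immediate from the construction of $\calP$. For (3), the inclusion $\H^0_\fkm(L_i)\subseteq(0):_{L_i}x$ follows from $x\in\fkm^N$; conversely, if $xm=0$ then every $\fkp\in\Ass(Rm)\subseteq\Ass(L_i)$ contains $x$, so the choice of $x$ forces $\fkp=\fkm$, whence $Rm$ has finite length and $m\in\H^0_\fkm(L_i)$; the same argument handles $M$. For (1), fix $i$ and $n\ge 1$ and pick $\fkq\in\Ass(L_i/x^nL_i)$ with $\fkq\ne\fkm$. Then $x\in\fkq$, and since $x$ avoids $\Ass(L_i)\setminus\{\fkm\}$, localizing at $\fkq$ shows $x$ is a non-zero-divisor on $(L_i)_\fkq$, hence so is $x^n$. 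This gives $\depth(L_i/x^nL_i)_\fkq=\depth(L_i)_\fkq-1=0$, so $\depth(L_i)_\fkq=1$; avoidance of $\calG_i$ then forces $\dim(L_i)_\fkq\le 1$, hence $=1$. Finally, the catenary property of $R$ together with $\Ass L_i\subseteq\Assh L_i\cup\{\fkm\}$ gives $\dim(L_i)_\fkq=\dim L_i-\dim R/\fkq$ (every minimal prime of $L_i$ strictly below $\fkq$ has dimension $\dim L_i$), so $\dim R/\fkq=\dim L_i-1=\dim L_i/x^nL_i$ and $\fkq\in\Assh(L_i/x^nL_i)$, as required.

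The main obstacle will be the finiteness of each $\calG_i$: I must argue that the Goto--Nakamura lemma, stated for rings satisfying $\Ass\subseteq\Assh\cup\{\fkm\}$, transfers to the module $L_i$. I expect this to go through by descending to the quotient $R/\Ann L_i$ and transporting the unmixedness hypothesis to that quotient, using $\Ass L_i\subseteq\Assh L_i\cup\{\fkm\}$ to ensure that $R/\Ann L_i$ itself satisfies the hypothesis of Lemma~\ref{finitely}, and then comparing $\depth(L_i)_\fkp$ with depth in $R/\Ann L_i$ localized at $\fkp$. Once this finiteness is secured, everything else is routine prime-avoidance bookkeeping.
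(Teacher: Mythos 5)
Your proposal is correct and follows essentially the same route as the paper: prime avoidance inside an $\fkm$-primary ideal against the finite set consisting of $\Ass(M)$, the $\Ass(L_i)$, and the loci $\{\fkp \ne \fkm : \depth (L_i)_\fkp = 1 < \dim (L_i)_\fkp\}$, whose finiteness is obtained exactly as you anticipate by passing to $R/\Ann(L_i)$ and invoking Lemma~\ref{finitely}, followed by the same catenary dimension count for condition $(1)$. The only divergence is cosmetic: the paper secures condition $(3)$ by replacing a first choice $y$ with a high power $x=y^{n_0+1}$ so that $(0):_{L_i}x=(0):_{L_i}x^2$ and then inducting along the filtration, whereas you intersect with $\fkm^{N}$ and argue via $\Ass(Rm)$ --- a slightly cleaner variant of the same step.
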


\begin{proof}

Set $I_i=\Ann(L_i)$, and $R_i=R/I_i$, then  $\Ass(R_i)\subseteq \Assh(R_i)\cup\{\fkm\}$ and $\dim R/I_i> \dim R/I_{i+1}$ for all  $i=0,\ldots,s-1$. Moreover, we have 
$$\Ass(R_i)=\Ass(L_i)=\{\fkp\in\Spec(R)\mid\fkp\in\Ass(M)\text { and } \dim R/\fkp=\dim R/I_i=d_i\}\cup\{\fkm\}.$$
Set  $$\calF_i=\{\fkp\in\Spec(R)\mid I_i\subset\fkp \text{ and } \height_{R_i}(\fkp/I_i)> 1=\depth((L_i)_\fkp)\}.$$
By Lemma \ref{finitely} and the fact $\Ass(L_i)\subseteq\Assh(L_i)\cup\{\m\}$, we see that the set $$\{\fkp\in\Spec(R_i)\mid \height_{R_i}(\fkp)> 1=\depth((L_i)_\fkp)\}$$
is finite, and so that $\calF_i$ are a finite set for all $i=1,\ldots,\ell$. Put $\calF=\Ass(M)\cup\bigcup\limits_{i=1}^t\calF_i\setminus\{\fkm\}$. By the Prime Avoidance Theorem, we can choose $y\in I$ such that   $y\not\in\bigcup\limits_{\fkp\in\calF }\fkp$ and $\dim M_i/y M_i=\dim M_i-1$ for all $i=1,\ldots,\ell$.  On the other hand,  
we can choose an integer $n_0$ such that $(0):_M y^n=(0):_M y^{n_0}$ and $(0):_{L_i} y^n=(0):_{L_i} y^{n_0}$, for all $n\ge n_0$ and $i=1,\ldots,\ell$. Put $x=y^{n_0+1}$. Then we have $x\not\in\bigcup\limits_{\fkp\in\calF }\fkp$ and  $(0):_{L_i} x^2=(0):_{L_i} x$ for all $i=1,\ldots,\ell$. 
Now we show that $x$ have the conditions as required.  

First let us  prove the condition $(1)$. To this end, consider $\fkp\in\Ass(N_i/x N_i)$ with $\fkp\not=\fkm $. Then we have $\depth(L_i/xL_i)_\fkp=0$. On the other hand, $\depth(M_i)_\fkp>0$ since $\fkp\not\in \Ass(L_i)\subseteq \Ass(M)$. Hence $\depth(L_i)_\fkp=1$. It implies that $\height_{R_i}(\fkp)=1$, since $\fkp\not\in\calF_i$. By the assumption $R_i$ is a catenary ring, therefore 
$$\dim R/\fkp= \dim R_i-\height_{R_i}(\fkp)=\dim R_i/x R_i=\dim L_i/x L_i.$$ Hence $\fkp\in \Assh (L_i/x L_i)$.

Since the condition $(2)$ is trivial, it remains to prove the condition $(3)$. Take $\fkp\in\Ass_R(0):_{L_i}x$ with $\fkp\not=\fkm$, Then $x\not\in\fkp$ as $\fkp\in \Assh(L_i)$. Hence $((0):_{L_i}x)_\fkp=(0)$ and this is a contradiction. It implies that $(0):_{N_i}x$ is finite length. Since $(0):_{L_i} x^2=(0):_{L_i} x$, we have $(0):_{L_i} x=\H^0_\fkm(L_i)$. It follows from the following exact sequence
$$0\to M_{i-1}\to M_i\to L_{i}\to 0$$
and $x\H^0_\fkm(L_i)=0$ for all $i=1,\ldots,\ell$ that the following sequence
$$0\to \H^0_\fkm(M_{i-1})\to \H^0_\fkm(M_i)\to \H^0_\fkm(L_{i})$$
$$\text{and  } 0\to (0):_{M_{i-1}}x\to (0):_{M_{i}}x\to (0):_{L_{i}}x $$
are exact. By induction and $(0):_Mx=(0):_Mx^2$, we have $(0):_Mx=\H^0_\fkm(M)$ and this completes the proof.
\end{proof}

The existence of Goto sequence is established by our next Corollary.

\begin{cor}\label{exists001}
Assume that $R$ is a homomorphic image of a Cohen-Macaulay
local ring and $I$  an $\fkm$-primary ideal of $R$.  Then there exists a system $\x=x_1,x_2,\ldots,x_s$ of elements of $I$ such that $\x$ is a Goto sequence on $M$.

\end{cor}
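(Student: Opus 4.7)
I would prove this by induction on the length $s\ge 0$, building the elements one at a time with Proposition~\ref{exists element}. The case $s=0$ is vacuous: the only non-trivial hypothesis is condition~(1) at $j=0$, which reduces to $\Ass(C_i) = \Assh(C_i)$ and is immediate from Proposition~\ref{d1}(2).

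For the inductive step, assume $x_1,\ldots,x_{s-1}\in I$ has been built as a Goto sequence on $M$, and set $\overline M = M/\q_{s-1}M$. Let $\overline D_i = (D_i+\q_{s-1}M)/\q_{s-1}M$, so that the induced filtration of $\overline M$ has successive quotients isomorphic to $C_i/\q_{s-1}C_i$. By condition~(1) of the inductive hypothesis (applied at $j = s-1$), each such quotient satisfies $\Ass \subseteq \Assh \cup \{\fkm\}$, which is exactly the hypothesis of Proposition~\ref{exists element}. Let $i^{\ast}$ denote the unique index (if any) with $d_{i^{\ast}} < s \le d_{i^{\ast}+1}$; Goto condition~(2) at $j=s$ then requires $x_s D_{i^{\ast}} = 0$.

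Then I would apply Proposition~\ref{exists element} to $\overline M$ with the filtration $\{\overline D_i\}$, with one refinement: rather than insisting only $x_s\in I$, I would require in the prime-avoidance step that $x_s$ lie in $I\cdot\Ann_R(D_{i^{\ast}})$. This yields $x_s\in I\cap\Ann_R(D_{i^{\ast}})$ satisfying the conclusions of Proposition~\ref{exists element} for $\overline M$. Goto condition~(2) then holds because $x_s\in \Ann_R(D_{i^{\ast}})$; conditions~(1) and~(3) come directly from Proposition~\ref{exists element}(1) and~(3), translated back from $\overline M$ to $M$ via $D_i\twoheadrightarrow\overline D_i$.

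The main obstacle is checking that the prime-avoidance survives the additional annihilator constraint, i.e.\ that $I\cdot\Ann_R(D_{i^{\ast}})\not\subseteq\fkp$ for each prime $\fkp\ne\fkm$ to be avoided in the application of Proposition~\ref{exists element} to $\overline M$. Since $I$ is $\fkm$-primary, $I\not\subseteq\fkp$; it therefore suffices to show $\fkp\notin\Supp(D_{i^{\ast}})$. By the inductive Goto property~(2) for $x_1,\ldots,x_{s-1}$, the parameters $x_{d_{i^{\ast}}+1},\ldots,x_{s-1}$ already annihilate $D_{i^{\ast}}$, so the portion of the filtration of $\overline M$ coming from $D_{i^{\ast}}$ collapses to a module supported only at $\fkm$. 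Hence every non-maximal prime $\fkp$ one has to avoid arises from the strata with $i>i^{\ast}$ and automatically satisfies $\dim R/\fkp > d_{i^{\ast}}$, placing $\fkp$ outside $\Supp(D_{i^{\ast}})$ as needed. Making this dimension separation rigorous through a careful tracking of $\Ass(\overline L_i)$ and the sets $\calF_i$ from Lemma~\ref{finitely} is the crux of the argument.
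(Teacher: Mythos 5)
Your strategy coincides with the paper's: induct on $s$, apply Proposition~\ref{exists element} to $\overline M=M/\q_{s-1}M$ equipped with its induced filtration (whose quotients satisfy the required $\Ass\subseteq\Assh\cup\{\m\}$ condition by part (1) of the inductive hypothesis), and fold the extra constraint $x_s\in\Ann_R(D_{i^{\ast}})$ into the prime-avoidance step. The one point where your argument actually breaks is the claimed dimension separation. It is not true that every non-maximal prime to be avoided satisfies $\dim R/\fkp>d_{i^{\ast}}$: already the primes in $\Assh$ of the higher strata of $\overline M$ have $\dim R/\fkp=d_i-(s-1)$, which can be strictly smaller than $d_{i^{\ast}}$ (e.g.\ $d_{i^{\ast}}=5$, $s=6$, $d_{i^{\ast}+1}=7$ gives dimension $2$), and the primes in the sets $\calF_i$ of Lemma~\ref{finitely} are smaller still. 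So this route to $\fkp\notin\Supp(D_{i^{\ast}})$ cannot be completed as stated.

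The conclusion you need is nevertheless true, and the paper's justification is both simpler and the one you should substitute. Every prime $\fkp\neq\m$ that must be avoided supports a subquotient of $\overline M$ and hence contains $\q_{s-1}$; on the other hand $x_1,\ldots,x_{d_{i^{\ast}}}$ is a system of parameters of $D_{i^{\ast}}$ (the elements produced by Proposition~\ref{exists element} cut the dimension of each module in the filtration by exactly one) and $x_{d_{i^{\ast}}+1},\ldots,x_{s-1}$ annihilate $D_{i^{\ast}}$ by Goto condition (2), so $\Ann_R(D_{i^{\ast}})+\q_{s-1}$ is $\m$-primary. A non-maximal $\fkp$ containing both $\q_{s-1}$ and $\Ann_R(D_{i^{\ast}})$ would contain this $\m$-primary ideal, a contradiction; hence $\fkp\notin V(\Ann_R(D_{i^{\ast}}))$ and the prime avoidance with the constraint $x_s\in I\cap\Ann_R(D_{i^{\ast}})$ goes through. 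With this substitution your proof becomes the paper's proof.
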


\begin{proof}
We prove this by induction on $s$, the case in which $s=1$ having been dealt  with in Lemma \ref{exists element}. So we suppose that $s=j\ge 2$ and that the result has been proved for smaller values of $s$. Suppose that $d_{i}<j\le d_{i+1}$ for some $i$. We see immediately from this induction hypothesis that
% By induction we have system $x_1,\ldots,x_j$ of $R$ such that
%satisfies the following conditions
$$\Ass(N_i/\q_{j-1} N_i)\subseteq \Assh(N_{j-1}/\q_{j-1}N_i)\cup \{\fkm\},$$
where $\q_{j-1}=(x_1,\ldots,x_{j-1})$, for all $i=0,\ldots,\ell-1$. Moreover the sequence $x_1,x_2,\ldots,x_{d_i}$ is a system of  parameters  of $D_i$. Therefore $\Ann(D_i)+\q_{j-1}$ is $\m$-primary ideals. So that, by Lemma \ref{exists element}, there exists an element $x_j\in I\cap \Ann(D_i)$, as required. This completes the inductive step, and the proof.
\end{proof}

Let $\fkq = (x_1, x_2,\ldots,x_d)$ be a parameter ideal in $R$ and let $M$ be an $R$-module. For
each integer $n\geq 1$ we denote by $\x^n$ the sequence $x^n_1, x^n_2,\ldots,x^n_d$. Let $K^{\bullet}(x^n)$ be the
Koszul complex of $R$ generated by the sequence $\x^n$ and let
$H^{\bullet}(\x^n;M) = H^{\bullet}(\Hom_R(K^{\bullet}(\x^n),M))$
be the Koszul cohomology module of $M$. Then for every $p\in\Bbb Z$ the family $\{H^p(\x^n;M)\}_{n\ge 1}$
naturally forms an inductive system of $R$-modules, whose limit
$$H^p_\fkq=\lim\limits_{n\to\infty} H^p(\x^n;M)$$
is isomorphic to the local cohomology module
$$H^p_\fkm(M)=\lim\limits_{n\to\infty} \Ext_R^p(R/\fkm^n,M)$$
For each $n\geq 1$ and $p \in\Bbb Z$ let $\phi^{p,n}_{\x,M}:H^p(\x^n;M)\to H^p_\fkm(M)$ denote the canonical
homomorphism into the limit.

%%%%%%%%%%%%%%%%%%%%%%%%%%%%%%%%%%%%%%%%%%%%%%%%%%%%%%%%%%%%%%%%%%%%%
%%%%%%%%%%%%%%%%%%%%%%%%%%%%%%%%%%%%%%%%%%%%%%%%%%%%%%%%%%%%%%%%%%%%%

With this notation we have the following result.

\begin{lem}[\cite{GS1}, Lemma 1.7]\label{split}  Let  $M$  be a finitely generated $R$-module and $x$ an $M$-regular element and $\x=x_1,\ldots,x_r$ be a system of elements in $R$ with $x_1= x$. Then there exists a splitting  exact sequence for each $p \in\Bbb Z$,
$$0\to H^p(\x;M)\to H^p(\x;M/xM)\to H^{p+1}(\x;M)\to0.$$ 
\end{lem}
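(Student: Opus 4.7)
The plan is to establish the splitting short exact sequence in three steps: first derive a long exact sequence in Koszul cohomology from the $M$-regularity of $x$, then observe that $x = x_1$ annihilates $H^\bullet(\x;-)$ to collapse the long sequence, and finally produce an explicit splitting from the tensor decomposition of the Koszul complex.

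Since $x$ is $M$-regular, we have the short exact sequence $0 \to M \xrightarrow{x} M \to M/xM \to 0$, and applying the exact functor $\Hom_R(K_\bullet(\x),-)$ (exact because $K_\bullet(\x)$ is a bounded complex of free modules) yields a short exact sequence of Koszul cochain complexes whose associated long exact sequence reads
$$\cdots \to H^p(\x;M) \xrightarrow{x} H^p(\x;M) \to H^p(\x;M/xM) \to H^{p+1}(\x;M) \xrightarrow{x} H^{p+1}(\x;M) \to \cdots.$$
Next, each generator $x_i$ of $\x$ annihilates $H^\bullet(\x;-)$: the contraction-by-$e_i$ operator from the DG-algebra structure of $K_\bullet(\x)$ is a chain homotopy between multiplication by $x_i$ and zero on $\Hom_R(K_\bullet(\x),-)$. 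Applied with $i = 1$, this makes the arrows labelled $x$ vanish, so the long exact sequence collapses into the desired short exact sequences
$$0 \to H^p(\x;M) \to H^p(\x;M/xM) \to H^{p+1}(\x;M) \to 0.$$

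For the splitting I would exploit the factorization $K_\bullet(\x) = K_\bullet(x_1) \otimes_R K_\bullet(\y)$ with $\y = x_2,\ldots,x_r$. Since $x = x_1$ acts as zero on $M/xM$, the double complex computing $K^\bullet(\x;M/xM)$ has trivial horizontal differential, so its total cohomology splits canonically as
$$H^p(\x;M/xM) \cong H^p(\y;M/xM) \oplus H^{p-1}(\y;M/xM).$$
A parallel analysis of the double complex computing $K^\bullet(\x;M)$, now using the $M$-regularity of $x$, identifies $H^p(\x;M)$ and $H^{p+1}(\x;M)$ with the two summands above, turning the SES into the obvious split sequence of a direct sum. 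The main obstacle is precisely this last identification: one must check that, under the chosen identifications, the SES inclusion coming from Step 2 and the snake-lemma connecting homomorphism actually coincide with the summand injection and projection of the decomposition of $H^p(\x;M/xM)$. This is a careful but routine sign-and-shift bookkeeping exercise, with no further conceptual input required beyond the two facts used in the first two steps.
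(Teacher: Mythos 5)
The paper offers no proof of this lemma: it is quoted directly from Goto--Suzuki \cite{GS1} (their Lemma 1.7), so there is no internal argument to compare yours against. Your outline is the standard proof and its first two steps are complete and correct: $\Hom_R(K_{\bullet}(\x),-)$ is exact because the Koszul complex consists of finite free modules, so the coefficient sequence $0\to M\xrightarrow{\ x\ }M\to M/xM\to 0$ induces a long exact sequence of Koszul cohomology, and the homotopy identity $d(e_i\wedge\omega)+e_i\wedge d\omega=x_i\omega$ gives $(\x)H^{\bullet}(\x;-)=0$, which kills the multiplication maps and breaks the long sequence into the stated short exact sequences.

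The deferred ``bookkeeping'' in the splitting step does close, but be aware that the precise statement you propose to verify is stronger than what is true. Write $C=K^{\bullet}(\y;M)$ and $\overline{C}=K^{\bullet}(\y;M/xM)$, so that $K^{n}(\x;M)=C^{n}\oplus C^{n-1}$ with $d(a,b)=(da,\pm xa\mp db)$ and $K^{\bullet}(\x;M/xM)=\overline{C}\oplus\overline{C}[-1]$ as complexes. The assignment $(a,b)\mapsto\overline{b}$ is a chain map $K^{\bullet}(\x;M)\to\overline{C}[-1]$ whose kernel, $C^{n}\oplus xC^{n-1}$ in degree $n$, is the cone of the isomorphism $C\xrightarrow{\ x\ }xC$ (this is where $M$-regularity of $x$ enters) and hence acyclic; thus $[(a,b)]\mapsto[\overline{b}]$ is an isomorphism $\phi\colon H^{p}(\x;M)\xrightarrow{\ \sim\ }H^{p-1}(\overline{C})$. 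The inclusion $\alpha$ of your short exact sequence sends $[(a,b)]$ to $([\overline{a}],[\overline{b}])$, and its first component is in general \emph{not} zero: it is the image of a class of $H^p(\y;M)$ annihilated by $x$ under $H^{p}(\y;M)\to H^{p}(\overline{C})$, whose kernel is $xH^{p}(\y;M)$ rather than $(0):_{H^p(\y;M)}x$. So $\alpha$ does not literally coincide with the summand injection into $H^{p-1}(\overline{C})$. What is true, and all that is needed, is that $\mathrm{pr}_{2}\circ\alpha=\phi$ is an isomorphism, so that $\phi^{-1}\circ\mathrm{pr}_{2}$ is a retraction of $\alpha$ and the sequence splits. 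With the verification rephrased this way, your argument is complete.
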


%%%%%%%%%%%%%%%%%%%%%%%%%%%%%%%%%%%%%%%%%%%%%%%%%%%%%%%%%%%%%%%%%%%%%
%%%%%%%%%%%%%%%%%%%%%%%%%%%%%%%%%%%%%%%%%%%%%%%%%%%%%%%%%%%%%%%%%%%%%

\begin{dfn}[\cite{GSa} Lemma 3.12]\label{sur}
{\rm Let $R$ be a Noetherian local ring with the maximal ideal $\fkm$ and $
\dim R=d \ge1$. Let $M$ be a finitely generated $R$-module. Then there exists an integer $n_0$ 
such that for all systems of parameters $\x=x_1,\ldots,x_d$  for $R$ contained in $\fkm^{n_0}$ and for all $p\in \Bbb Z$, the canonical homomorphisms
$$\phi^{p,1}_{\x,M}:H^p(\x,M)\to H^p_\fkm(M)$$
into the inductive limit are surjective on the socles.
The least integer $n_0$ with this property is called a Goto number of $R$-module $M$ and denote by $\rmg(M)$.}
\end{dfn}

%%%%%%%%%%%%%%%%%%%%%%%%%%%%%%%%%%%%%%%%%%%%%%%%%%%%%%%%%%%%%%%%%%%%%
%%%%%%%%%%%%%%%%%%%%%%%%%%%%%%%%%%%%%%%%%%%%%%%%%%%%%%%%%%%%%%%%%%%%%

We need the following result in next section.

\begin{lem}\label{2.10}
Assume that $N$ is a submodule of $M$ such that $M/N$ is Cohen-Macaulay and $\dim N<\dim M$. Then  for all parameter ideals $\q\subseteq\m^{\rmg(M)}$ of $M$, we have 
$$\calN(\q;M)=\calN(\q;N)+\calN(\q;M/N).$$
%$$ \text{ and }[\q M+N]:_M \fkm=[\q M:_M\fkm]+N,$$
%for all $n\ge 0$. 
\end{lem}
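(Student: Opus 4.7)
The plan is to first derive the short exact sequence
$$0 \to N/\q N \to M/\q M \to (M/N)/\q(M/N) \to 0 \qquad (\ast)$$
and then to take socles. Writing $\q=(\x)$ with $\x=x_1,\ldots,x_d$ a system of parameters for $M$, note that $\dim M/N=d$ (since $\dim N<d$), so $\x$ is a system of parameters for the Cohen-Macaulay module $M/N$; consequently $\x$ is an $M/N$-regular sequence and $H^p(\x,M/N)=0$ for $p<d$. Feeding $0\to N\to M\to M/N\to 0$ into the long exact sequence of Koszul cohomology and using $H^d(\x,-)=-/\q(-)$ together with $H^{d-1}(\x,M/N)=0$ yields $(\ast)$. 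Applying the left-exact functor $\Hom_R(k,-)$ then gives
$$0\to\Soc(N/\q N)\to\Soc(M/\q M)\xrightarrow{\;\alpha\;}\Soc((M/N)/\q(M/N)),$$
which already produces the inequality $\calN(\q;M)\le\calN(\q;N)+\calN(\q;M/N)$.

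The heart of the argument is to show that $\alpha$ is surjective. I would chase the commutative square
$$\begin{CD}
\Soc(M/\q M) @>\alpha>> \Soc((M/N)/\q(M/N)) \\
@V{\phi_M}VV @V{\phi_{M/N}}VV \\
\Soc(\H^d_\m(M)) @>\cong>> \Soc(\H^d_\m(M/N))
\end{CD}$$
whose vertical arrows are induced by the canonical maps $\phi^{d,1}_{\x,-}$ and whose bottom isomorphism comes from the long exact sequence in local cohomology combined with $\H^d_\m(N)=\H^{d+1}_\m(N)=0$ (these hold because $\dim N<d$). The hypothesis $\q\subseteq\m^{\rmg(M)}$ makes $\phi_M$ surjective by Definition~\ref{sur}, so $\phi_{M/N}\circ\alpha$ is surjective and in particular so is $\phi_{M/N}$. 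Since $M/N$ is Cohen-Macaulay, the classical formula for the index of reducibility on Cohen-Macaulay modules gives $\dim_k\Soc((M/N)/\q(M/N))=r_d(M/N)=\dim_k\Soc(\H^d_\m(M/N))$, so $\phi_{M/N}$ is a surjection between $k$-vector spaces of equal finite dimension, hence an isomorphism. Combined with the surjectivity of $\phi_{M/N}\circ\alpha$ this forces $\alpha$ itself to be surjective.

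The left-exact socle sequence is thereby promoted to a short exact sequence, and taking $k$-dimensions yields the desired identity $\calN(\q;M)=\calN(\q;N)+\calN(\q;M/N)$. The principal obstacle is precisely the surjectivity of $\alpha$ in the absence of any Goto-number hypothesis on the quotient $M/N$; this is bypassed by using the Cohen-Macaulay dimension count on $M/N$ to turn the Goto surjectivity for $M$ into an isomorphism for $\phi_{M/N}$, and from there into the surjectivity of $\alpha$.
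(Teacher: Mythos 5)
Your proposal is correct and follows essentially the same route as the paper: the same short exact sequence $0\to N/\q N\to M/\q M\to (M/N)/\q(M/N)\to 0$ (obtained because $\x$ is a regular sequence on the Cohen--Macaulay module $M/N$), the same commutative square comparing $\phi_M$ and $\phi_{M/N}$ over the isomorphism $\H^d_\m(M)\cong\H^d_\m(M/N)$, and the same use of the hypothesis $\q\subseteq\m^{\rmg(M)}$ to make $\phi_M$ surjective on socles. The only divergence is in the last step: where you deduce surjectivity of $\alpha$ from a socle-dimension count via the classical identity $\calN(\q;M/N)=r_d(M/N)$ for Cohen--Macaulay modules, the paper instead invokes the injectivity of $\phi_{M/N}$ (the transition maps in the inductive system are injective for a Cohen--Macaulay module), which gives the same conclusion slightly more directly.
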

\begin{proof}
Let $\q=(x_1,x_2\ldots, x_d)$ be a parameter ideal of $M$ such that $\q\subseteq \m^n$. Then by the definition of Goto number,  the canonical map
$$\xymatrix{\phi_{M}:M/(x_1,x_2, \ldots, x_{d})M\ar[r]& \H^{d}_\m(M)=\lim\limits_{q\to\infty}M/(x_1^q,x_2^q, \ldots,x_{d}^q)M}$$
is surjective on the socles.  We put $\calM= M/N$ and  look at the exact sequence
$$\xymatrix{0\ar[r]& N\ar[r]^\iota& M \ar[r]^\epsilon & \calM \ar[r]&0}$$
of $R$-modules, where $\iota$ (resp. $\epsilon$) denotes the embedding (resp. the canonical epimorphism).
 Then, since $\dim M> \dim_R N$ and since $x_1,x_2, \ldots, x_d$ is a regular sequence for $\calM$, we get the following commutative diagram 
$$\xymatrix{&&& 0\ar[d]&\\
0\ar[r]& N/\q N\ar[r]^{\overline \iota}& M/\q M \ar[d]^{\phi_M}\ar[r]^{\overline \epsilon} & \calM/\q \calM \ar[r] \ar[d]^{\phi_{\calM}}&0\\
& & \H^d_\m(M) \ar[r]^{=} & \H^d_\m(\calM) &}$$
with  exact first row. Let $x\in (0):_{\calM/\q \calM}\m$. Then, since $\phi_M$ is surjective on the socles, we get  an element $y\in (0):_{M/\q M}\m$ such that $\phi_{\calM}(x)=\phi_M(y)$. Thus $\overline{\epsilon}(y)=x$, because the canonical map $\phi_{\calM}$ is injective, whence
$$[N+\q M]:_M \fkm=N + [\q M:_M\fkm].$$
Moreover we have
$$\calN(\q;M)=\calN(\q;N)+\calN(\q;M/N).$$
\end{proof}

%%%%%%%%%%%%%%%%%%%%%%%%%%%%%%%%%%%%%%%%%%%%%%%%%%%%%%%%%%%%%%%%%%%%%
%%%%%%%%%%%%%%%%%%%%%%%%%%%%%%%%%%%%%%%%%%%%%%%%%%%%%%%%%%%%%%%%%%%%%

\begin{lem}\label{2.7}
 Let $M$ be a finitely generated $R$-module. Assume that $x$ is an $M$-regular element of $M$ such that $x\in \m^{\rmg(M)}$. Then
we have %$$\rmg(M/xM)\le \rmg(M),$$and
 $$r_i(M)\le r_{i-1}(M/xM)$$
for all $i\in \Bbb Z$
\end{lem}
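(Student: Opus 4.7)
The goal is to produce an inequality of socle lengths, so the natural approach is to compare $\H^i_\m(M)$ and $\H^{i-1}_\m(M/xM)$ through the long exact sequence of local cohomology coming from the $M$-regular element $x$, but cleverly routed through Koszul cohomology so that we can exploit the Goto number hypothesis $x\in\m^{\rmg(M)}$. Concretely, since $x$ is $M$-regular it is part of a system of parameters for $M$; by raising the remaining elements to sufficiently high powers we may pick $\x=x_1,x_2,\dots,x_d$ with $x_1=x$ and $\x\subseteq\m^{\rmg(M)}$, so that by Definition~\ref{sur} the canonical maps $\phi^{p,1}_{\x,M}\colon H^p(\x;M)\to\H^p_\m(M)$ are surjective on socles for \emph{every} $p$.

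Next I would apply Lemma~\ref{split} with $x_1=x$ to obtain the split short exact sequence
\[
0\longrightarrow H^{i-1}(\x;M)\longrightarrow H^{i-1}(\x;M/xM)\longrightarrow H^i(\x;M)\longrightarrow 0,
\]
and combine it with the long exact sequence in local cohomology induced by $0\to M\xrightarrow{x}M\to M/xM\to 0$. The functoriality of the comparison maps $\phi^{\bullet,1}_{\x,-}$ and the naturality of connecting homomorphisms give a commutative diagram in which the right-hand square reads
\[
\begin{array}{ccc}
H^{i-1}(\x;M/xM) & \longrightarrow & H^i(\x;M) \\
\big\downarrow\phi^{i-1}_{M/xM} & & \big\downarrow\phi^i_M \\
\H^{i-1}_\m(M/xM) & \stackrel{\delta}{\longrightarrow} & \H^i_\m(M).
\end{array}
\]

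The core of the proof is then an element chase showing that the connecting map $\delta$ restricted to socles is surjective. Given $\alpha\in(0):_{\H^i_\m(M)}\m$, the Goto number hypothesis produces $\gamma\in(0):_{H^i(\x;M)}\m$ with $\phi^i_M(\gamma)=\alpha$. Because the top row splits as $R$-modules (so the splitting carries socles to socles), $\gamma$ lifts to some $\gamma'\in(0):_{H^{i-1}(\x;M/xM)}\m$ mapping to $\gamma$. Setting $\beta:=\phi^{i-1}_{M/xM}(\gamma')\in(0):_{\H^{i-1}_\m(M/xM)}\m$, commutativity of the square gives $\delta(\beta)=\phi^i_M(\gamma)=\alpha$. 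Hence $\delta$ surjects $(0):_{\H^{i-1}_\m(M/xM)}\m$ onto $(0):_{\H^i_\m(M)}\m$, which yields the desired inequality
\[
r_i(M)=\ell_R\bigl((0):_{\H^i_\m(M)}\m\bigr)\le \ell_R\bigl((0):_{\H^{i-1}_\m(M/xM)}\m\bigr)=r_{i-1}(M/xM).
\]

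The main technical point to verify carefully is the compatibility of the two connecting homomorphisms — i.e.\ that the square relating the Koszul boundary $H^{i-1}(\x;M/xM)\to H^i(\x;M)$ to the local cohomology boundary $\delta$ genuinely commutes after passing to the direct limit that defines $\phi^{\bullet,1}_{\x,-}$. This is routine from the construction of $\phi^{\bullet,1}_{\x,-}$ as a map of $\delta$-functors, but it is the one spot where one must be precise. Once this is in hand, the rest of the argument is just the splitting of Lemma~\ref{split} combined with the Goto-number surjectivity, and the inequality follows.
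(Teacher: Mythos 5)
Your proposal is correct and follows essentially the same route as the paper: extend $x$ to a system of parameters inside $\m^{\rmg(M)}$, invoke the socle-surjectivity from the definition of the Goto number, and combine the splitting of Lemma~\ref{split} with the commutative diagram linking Koszul and local cohomology. The only cosmetic difference is that you run an element chase on socles where the paper applies $\Hom(k,-)$ to the same diagram; these are the same argument.
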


\begin{proof}

Let $x_2,\ldots,x_d$ be a system of parameters of module $M/xM$ such that $x_i\in\m^{\rmg(M)}$. Put $\x=x_1,x_2,\ldots,x_d$ and $\q=(\x)$, where $x_1=x$. Since $x\in\m^{\rmg(M)}$, we have $\q\subseteq \m^{\rmg(M)}$.
By the definition of Goto number, we have  the canonical homomorphism
$$H^i(\x,M)\to H^i_\fkm(M)$$
into the inductive limit are surjective on the socles, for each $i\in \Bbb Z$. By the regularity of $x=x_1$ on $M$, it follows from the following sequence 
$$\xymatrix{0\ar[r]&M\ar[r]^{.x}&M\ar[r]&M/xM\ar[r]&0}$$
that there are induced the diagram
$$\xymatrix{0\ar[r]&H^i(\x;M)\ar[d]\ar[r]&H^i(\x,M/xM)\ar[r]\ar[d]&H^{i+1}(\x;M)\ar[r]\ar[d]&0\\
\ar[r]&H^i_\fkm(M)\ar[r]&H^i_\fkm(M/xM)\ar[r]&H^{i+1}_\fkm(M)\ar[r]&}$$
commutes, for all $i\in \Bbb Z$. 
 It follows from the above commutative diagrams and Lemma \ref{split} that after applying the functor $\Hom(k,*)$, we obtain the commutative diagram 
$$\xymatrix{\Hom(k,H^i(\x,M/xM))\ar[r]\ar[d]&\Hom(k,H^{i+1}(\x;M))\ar[r]\ar[d]&0\\
\Hom(k,H^i_\fkm(M/xM))\ar[r]&\Hom(k,H^{i+1}_\fkm(M))}$$
for all $i\in \Bbb Z$. Since the map $\Hom(k,H^{i+1}(\x;M))\to\Hom(k,H^{i+1}_\fkm(M))$ is surjective, so is the map $\Hom(k,H^i_\fkm(M/xM))\to\Hom(k,H^{i+1}_\fkm(M))$. Therefore the map $\Hom(k,H^{i}(\x;M/xM))\to\Hom(k,H^{i}_\fkm(M/xM))$ is surjective and $r_i(M)\le r_{i-1}(M/xM)$
for all $i\in \Bbb Z$. This completes the proof.

 %Thus for all systems $\x$ of parameters of module $M/xM$ such that $x_i\in\m^{\rmg(M)}$, we have the map $\Hom(k,H^{i}(\x;M/xM))\to\Hom(k,H^{i}_\fkm(M/xM))$ is surjective for all $i\in \Bbb Z$. Hence we have   $$r_i(M)\le r_{i-1}(M/xM)$$ for all $i\in \Bbb Z$, as required.

\end{proof}

\begin{lemma}\label{23}
Let $M$ be a finitely generated $R$-module with $\dim M=s\ge 1$.
Let $k$ and $l$ be two positive integers.  Then there
exists an integer $n_3> l$ such that
$$(\frak m^{n_3}+\H^0_\m(M)):\frak m^{k}\subseteq\frak m^{l}M+\H^0_\m(M).$$

\end{lemma}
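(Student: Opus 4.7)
The plan is to reduce the statement to the module $\bar M := M/\H^0_\m(M)$, where the $\m$-adic topology is Hausdorff, and then apply a Chevalley-type theorem.

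First I would rephrase the desired inclusion in terms of $\bar M$. Given $y \in (\mathfrak{m}^{n_3}M + \H^0_\m(M)) :_M \mathfrak{m}^k$, writing $\bar y$ for its image in $\bar M$, we have $\mathfrak{m}^k \bar y \subseteq \mathfrak{m}^{n_3}\bar M$, i.e.\ $\bar y \in \mathfrak{m}^{n_3}\bar M :_{\bar M} \mathfrak{m}^k$. Since the preimage of $\mathfrak{m}^l \bar M$ in $M$ is $\mathfrak{m}^l M + \H^0_\m(M)$, it suffices to produce $n_3 > l$ with
$$\mathfrak{m}^{n_3}\bar M :_{\bar M} \mathfrak{m}^k \;\subseteq\; \mathfrak{m}^l \bar M.$$

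Next I would verify that $\bigcap_{n\ge 1}\bigl(\mathfrak{m}^n \bar M :_{\bar M} \mathfrak{m}^k\bigr) = 0$. If $z$ lies in this intersection, then $\mathfrak{m}^k z \subseteq \bigcap_n \mathfrak{m}^n \bar M$, and the latter vanishes by Krull's intersection theorem applied to the finitely generated module $\bar M$ over the Noetherian local ring $R$. Hence $z \in 0 :_{\bar M} \mathfrak{m}^k \subseteq \H^0_\m(\bar M)$, which is zero by the very construction of $\bar M$.

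Finally I would invoke Chevalley's theorem: for any decreasing family $\{I_n\}$ of submodules of a finitely generated $R$-module $N$ with $\bigcap_n I_n = 0$, and for each $l \ge 1$, there exists $n$ with $I_n \subseteq \mathfrak{m}^l N$. Applied to the decreasing chain $I_n := \mathfrak{m}^n \bar M :_{\bar M} \mathfrak{m}^k$ inside $\bar M$, this produces an integer $n_3$ with $I_{n_3} \subseteq \mathfrak{m}^l \bar M$; replacing $n_3$ by $\max(n_3, l+1)$ if necessary preserves the inclusion (the chain $\{I_n\}$ shrinks with $n$) and ensures $n_3 > l$. The main ingredients are thus Krull's intersection theorem and Chevalley's theorem, and there is no substantive obstacle beyond these standard tools; the key conceptual step is simply passing to $\bar M$ so that the $\m$-torsion obstruction disappears.
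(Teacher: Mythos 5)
Your reduction to $\bar M=M/\H^0_\m(M)$ and your verification that $\bigcap_{n\ge 1}\bigl(\m^n\bar M:_{\bar M}\m^k\bigr)=0$ are both correct, but the last step is a genuine gap: the ``Chevalley's theorem'' you invoke --- every descending chain of submodules of a finitely generated module with zero intersection is eventually contained in $\m^l N$ --- is a statement about \emph{complete} local rings, and no completeness hypothesis is available here (the lemma is stated over an arbitrary Noetherian local ring, and the paper's standing assumptions never include completeness). For non-complete $R$ the principle fails: take $R=k[x,y]_{(x,y)}$, choose $f=\sum_{i\ge 1}a_ix^i\in k[[x]]$ transcendental over $k(x)$, let $P=(y-f)\hat R$ and set $I_n=(P+\hat{\m}^n)\cap R$; then $\bigcap_n I_n=P\cap R=(0)$, yet $y-\sum_{i<n}a_ix^i\in I_n\setminus\m^2$ for every $n$. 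So knowing that your chain $I_n=\m^n\bar M:_{\bar M}\m^k$ has zero intersection does not by itself give the uniform containment $I_{n_3}\subseteq\m^l\bar M$; that containment is precisely what still has to be proved.

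The paper closes exactly this gap with Artin--Rees instead of Chevalley. Since $\H^0_\m(\bar M)=0$ and $\dim\bar M\ge 1$, there is an $\bar M$-regular element $a\in\m^k$. Letting $m$ be an Artin--Rees number for $a\bar M\subseteq\bar M$, one gets
$$a\bigl(\m^{l+m}\bar M:\m^k\bigr)\subseteq a\bigl(\m^{l+m}\bar M:a\bigr)=\m^{l+m}\bar M\cap a\bar M=\m^l\bigl(\m^m\bar M\cap a\bar M\bigr)\subseteq a\,\m^l\bar M,$$
and cancelling the regular element $a$ yields $\m^{l+m}\bar M:\m^k\subseteq\m^l\bar M$, i.e.\ the lemma with $n_3=l+m$. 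If you replace the appeal to Chevalley by this Artin--Rees computation (or add a completeness hypothesis, which the paper does not have), your argument goes through; as written, it does not.
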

\begin{proof}
Let $\overline M=M/\H^0_\m(M)$. Then there is an $\overline{M}$-regular element $a$ contained in $\frak m^{k}$. By the Artin-Rees Lemma, there
exists a positive integer $m$ such  that $\frak m^{l+m}\overline{M}
\cap a\overline{M} =\frak m^l(\frak m^{m}\overline{M}\cap a\overline{M} )$. Set $n_3=l+m$. We have
$$a(\frak m^{n_3}\overline{M}:\frak m^k)\subseteq a(\frak m^{n_3}\overline{M} :a)=\frak m^{n_3}\overline{M} \cap a\overline{M} =\frak m^{l}(\frak m^m\overline{M} \cap a\overline{M} ),$$
and so  $a(\frak m^{n_3}\overline{M} :\frak m^k)\subseteq a\frak m^{l}
\overline M$. It follows from the regularity of  $a$ that
$\frak m^{n_3}\overline{M} :\frak m^k\subseteq \frak m^{l} \overline{M} $.
Hence $(\frak m^{n_3}M+\H^0_\m(M)):{\frak m}^k\subseteq \frak m^{l}
M+\H^0_\m(M)$, as required.
\end{proof}

\begin{lemma}\label{251a}Let $M$ be a finitely generated $R$-module with $\dim M=s\ge 1$.
Then there exists a positive integer $n_4$ such that  for all $x\in \frak m^{n_4}$, we have
 $$r_0(M)+r_1(M)\le r_0(M/xM).$$
\end{lemma}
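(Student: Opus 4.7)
The plan is to compare $M$ with its quotient $N := M/L$, where $L := \H^0_\m(M)$. Since $\dim M \ge 1$ and $\dim L = 0$, we have $\dim N = \dim M$, and as $\H^0_\m(N) = 0$ we obtain $\depth N \ge 1$ with $\H^i_\m(N) \cong \H^i_\m(M)$ for $i \ge 1$. Fix once and for all an $N$-regular element $y \in \m$ (it exists since $\depth N \ge 1$), Artin--Rees constants $c_L$ and $c_N$ for the inclusions $L \subseteq M$ and $yN \subseteq N$, and let $n_1$ satisfy $\m^{n_1} L = 0$. Take $n_4 \ge n_1 + c_L + c_N$. The substantive case of the lemma is when $x \in \m^{n_4}$ is a parameter element of $M$ that is moreover $N$-regular: then $xL = 0$, and Artin--Rees gives $xM \cap L \subseteq \m^{n_4 - c_L} L = 0$.

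Tensoring $0 \to L \to M \to N \to 0$ with $R/(x)$, and using $\Tor_1^R(R/(x), N) = 0:_N x = 0$ together with $xL = 0$, yields the short exact sequence
$$0 \to L \to M/xM \to N/xN \to 0. \qquad (\ast)$$
From $0 \to N \xrightarrow{x} N \to N/xN \to 0$ and $\H^0_\m(N)=0$, we read off $\H^0_\m(N/xN) = 0:_{\H^1_\m(M)} x$; since $\Soc(\H^1_\m(M))$ is killed by $\m \ni x$, this yields $\Soc(N/xN) = \Soc(\H^1_\m(M))$, of length $r_1(M)$. Applying $\Hom_R(k,-)$ to $(\ast)$ produces the left-exact sequence
$$0 \to \Soc(L) \to \Soc(M/xM) \to \Soc(N/xN),$$
so the lemma reduces to proving that the right-hand map is surjective: once this is done, $r_0(M/xM) = r_0(M) + r_1(M)$.

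To prove this surjectivity, let $\bar n \in \Soc(N/xN)$ be represented by $n \in N$ with $\m n \subseteq xN$. Then $yn \in xN \subseteq \m^{n_4} N$, so Artin--Rees applied to $yN \subseteq N$, combined with the $N$-regularity of $y$, forces $n \in \m^{n_4 - c_N} N$; lift $n$ to some $\tilde n \in \m^{n_4 - c_N} M$. For each $\mu \in \m$, the element $\mu \tilde n$ lies in $xM + L$, so uniquely $\mu \tilde n = x m_\mu + l_\mu$ with $m_\mu \in M$, $l_\mu \in L$ (by $xM \cap L = 0$). Since $\mu \tilde n$ and $x m_\mu$ both lie in $\m^{n_4 - c_N + 1} M$, so does $l_\mu$; combined with $l_\mu \in L$, Artin--Rees for $L \subseteq M$ yields $l_\mu \in \m^{n_4 - c_N + 1 - c_L} L = 0$ by the choice of $n_4$. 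Therefore $\m \tilde n \subseteq xM$, and $\tilde n$ represents an element of $\Soc(M/xM)$ mapping to $\bar n$.

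The main obstacle is this surjectivity on socles; a priori its failure is measured by a connecting class in $\Ext_R^1(k, L)$, which can well be nonzero. The key trick is to exploit that $x$ lies in a very high power of $\m$: nested applications of Artin--Rees (first to $yN \subseteq N$, then to $L \subseteq M$) push both the representative $\tilde n$ and the associated ``defect'' terms $l_\mu \in L$ into sufficiently deep $\m$-powers that the latter are forced to vanish by $\m^{n_1} L = 0$.
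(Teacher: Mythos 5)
Your proof is correct, under the reading (which the paper's own proof also implicitly adopts, since it appeals to Lemma \ref{2.7} for a regular element) that $x$ is a parameter element regular on $N=M/\H^0_\m(M)$; for literally arbitrary $x\in\m^{n_4}$, e.g.\ $x=0$, the stated inequality fails whenever $r_1(M)\neq 0$, so your explicit restriction is the right one. The skeleton coincides with the paper's: both split off $L=\H^0_\m(M)$, pass to the exact sequence $0\to L\to M/xM\to N/xN\to 0$, and show the induced sequence of socles is exact on the right. You differ in two places. For the surjectivity on socles, the paper routes through its Lemma \ref{23} to obtain the decomposition $(xM+\H^0_\m(M)):\m=xM:\m+\H^0_\m(M)$, whereas you lift a socle element of $N/xN$ directly via two nested Artin--Rees estimates (for $yN\subseteq N$ and for $L\subseteq M$); these are essentially the same mechanism in different bookkeeping. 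The genuine divergence is in how the second summand is produced: the paper invokes its Koszul-cohomology/Goto-number Lemma \ref{2.7} to get the inequality $r_1(M)=r_1(N)\le r_0(N/xN)$, which is why it requires $n_4\ge\rmg(M/\H^0_\m(M))$, while you compute $r_0(N/xN)$ exactly, as $\ell\bigl(\Soc(0:_{\H^1_\m(N)}x)\bigr)=\ell\bigl(\Soc(\H^1_\m(M))\bigr)=r_1(M)$, from the local cohomology long exact sequence of $0\to N\xrightarrow{x}N\to N/xN\to 0$ using only $\depth N\ge 1$. Your route is more elementary (no Goto numbers are needed for this lemma), is self-contained, and yields the sharper conclusion $r_0(M/xM)=r_0(M)+r_1(M)$ rather than just the inequality; the paper's route has the mild advantage of reusing machinery it needs elsewhere anyway.
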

\begin{proof}
Since $H^0_\frak m(M)$ have finite length, there exists an integer $l$ such that
$\frak m^l M\cap \H^0_\frak m(M)=0$. By Lemma \ref{23}, there is an integer $n_4> \max\{l,\rmg(M/\H^0_\m(M))\}$ such
that for all $x\in\frak m^{n_4}$  we have
$$(x M+\H^0_\frak m(M)):\frak m\subseteq (\frak m^{n_4}M+\H^0_\frak m(M)):\frak m\subseteq \frak m^{l}M+\H^0_\frak m(M).$$ 
 Let $b\in(x M+\H^0_\frak m(M)):\frak m$. Write $b=\alpha+\beta$ with $\alpha\in \frak m^{l}M$ and $\beta\in \H^0_\frak m(M)$.
 Then,  since $x\in \frak m^{n_4}\subseteq\frak m^{l+1}$, we get that
  $$\frak m\alpha\subseteq \frak m^{l+1}M\cap(xM+\H^0_\frak m(M))=x M+\frak m^{l+1}M\cap \H^0_\frak m(M)=xM.$$
  Thus $\alpha\in x M:\frak m$, and so  $(x M+\H^0_\frak m(M)):\frak m
 = x M:\frak m+\H^0_\frak m(M)$.
 Since $\frak m^l M\cap H^0_\frak m(M)=0$, we have the following exact sequence
 $$0\to \H^0_\frak m(M)\to M/x M\to M/(x M+\H^0_\frak m(M))\to 0.$$
It follow that the sequence
$$0\to (0):_M\frak m\to (x M:\frak m)/x M\to ((x M+\H^0_\frak m(M)):\frak m)/(x M+\H^0_\frak m(M))\to 0$$
is exact. Therefore, for all   $x\in \frak m^{n_4}$ we have  
$$r_0(M)+r_0(M/xM+\H^0_\m(M))\le r_0(M/xM).$$
Since $n_4\ge \rmg(M/\H^0_\m(M))$, by Lemma \ref{2.7}, we have $r_1(M/H^0_\m(M))\le r_0(M/xM+\H^0_\m(M))$.
Since $r_1(M)=r_1(M/H^0_\m(M))$, we have
$$r_0(M)+r_1(M)\le r_0(M/xM),$$
as required.

\end{proof}

\begin{cor}\label{leCMt}
 Let $M$ be a finitely generated $R$-module with $\dim M\ge 2$. Then there exists an integer $n$ such that for all parameter elements $x\in \m^n$, we have
$$r(M)\le r(M/xM).$$
\end{cor}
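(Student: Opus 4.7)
The plan is to reduce to the $\H^0_\m$-torsion-free quotient $\overline M = M/\H^0_\m(M)$, on which a suitable parameter element $x$ is regular, and then to combine Lemma \ref{2.7} with Lemma \ref{251a}. Since $\H^0_\m(M)$ has finite length, the long exact sequence of local cohomology applied to $0 \to \H^0_\m(M) \to M \to \overline M \to 0$ yields $\H^i_\m(M) \cong \H^i_\m(\overline M)$ for every $i \ge 1$, hence $r_i(M) = r_i(\overline M)$ for $i \ge 1$.

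Next I would choose $n$ large enough so that: (i) $\m^n \cdot \H^0_\m(M) = 0$, (ii) $\m^n M \cap \H^0_\m(M) = 0$ (by Artin--Rees), (iii) $n \ge \rmg(\overline M)$, and (iv) $n \ge n_4$, where $n_4$ is the integer supplied by Lemma \ref{251a}. For a parameter element $x \in \m^n$ that is also $\overline M$-regular---a condition enforced by further restricting $n$ so as to avoid, via prime avoidance, the finitely many non-maximal associated primes of $\overline M$---I apply the snake lemma to multiplication by $x$ on $0 \to \H^0_\m(M) \to M \to \overline M \to 0$. Using $x \cdot \H^0_\m(M) = 0$ and $(0:_{\overline M}x) = 0$, this produces the short exact sequence
\begin{equation*}
0 \to \H^0_\m(M) \to M/xM \to \overline M/x\overline M \to 0.
\end{equation*}
Because $\H^0_\m(M)$ has finite length, the associated long exact sequence in local cohomology gives $\H^i_\m(M/xM) \cong \H^i_\m(\overline M/x\overline M)$ for every $i \ge 1$, hence $r_i(M/xM) = r_i(\overline M/x\overline M)$ for $i \ge 1$.

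Now Lemma \ref{2.7} applied to $\overline M$ and the $\overline M$-regular element $x \in \m^{\rmg(\overline M)}$ yields $r_i(\overline M) \le r_{i-1}(\overline M/x\overline M)$ for every $i \in \Bbb Z$. For $i \ge 2$ this rewrites as
\begin{equation*}
r_i(M) = r_i(\overline M) \le r_{i-1}(\overline M/x\overline M) = r_{i-1}(M/xM).
\end{equation*}
Combining with the inequality $r_0(M) + r_1(M) \le r_0(M/xM)$ from Lemma \ref{251a} and summing,
\begin{equation*}
r(M) = r_0(M) + r_1(M) + \sum_{i=2}^{d} r_i(M) \le r_0(M/xM) + \sum_{j=1}^{d-1} r_j(M/xM) = r(M/xM),
\end{equation*}
which finishes the proof. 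The main delicate point is the coordination of the constraints on $n$: a single $n$ must make every parameter element $x \in \m^n$ simultaneously $\overline M$-regular, annihilate $\H^0_\m(M)$, and lie in the Goto range required by Lemma \ref{2.7}; once all of these thresholds are consolidated into one, the cohomological bookkeeping above is routine.
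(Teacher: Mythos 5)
Your argument is the paper's argument: pass to $\overline M=M/\H^0_\m(M)$, apply Lemma \ref{2.7} to obtain $r_i(M)\le r_{i-1}(M/xM)$ for $i\ge 2$, feed in Lemma \ref{251a} for the $r_0+r_1$ contribution, and sum. You are in fact more explicit than the paper about the two identifications that make this legitimate, namely $r_i(M)=r_i(\overline M)$ for $i\ge1$ and $r_j(M/xM)=r_j(\overline M/x\overline M)$ for $j\ge1$ via the finite-length kernel in $0\to\H^0_\m(M)\to M/xM\to\overline M/x\overline M\to0$; the paper leaves both implicit.

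The one step that does not work as written is the claim that $\overline M$-regularity of $x$ can be ``enforced by further restricting $n$ so as to avoid, via prime avoidance, the finitely many non-maximal associated primes of $\overline M$.'' Prime avoidance produces \emph{one} element of $\m^n$ outside $\bigcup\fkp$; it does not make \emph{every} element of $\m^n$ avoid those primes, and no power of $\m$ is disjoint from a fixed non-maximal prime. A parameter element of $M$ is only required to avoid the maximal-dimensional primes of $\Supp M$, so it may lie in a lower-dimensional associated prime of $\overline M$, and for such $x$ Lemma \ref{2.7} is simply not applicable. To be fair, this gap is equally present, and unacknowledged, in the paper's own proof, which invokes Lemma \ref{2.7} --- a statement about regular elements --- for an arbitrary parameter element $x\in\m^{n_1}$. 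In the only place the corollary is used (Proposition \ref{exists}), the element is produced by Proposition \ref{exists element} so as to avoid $\Ass(M)\setminus\{\m\}$; hence the weaker statement ``for all parameter elements $x\in\m^n$ that are regular on $M/\H^0_\m(M)$,'' which your argument does establish, is what is actually needed.
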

\begin{proof}
Choose $n_1$, as lager as possible, such that $n_1\ge \rmg(M/\H^0_\m(M))$ and for all parameter $x\in \frak m^{n_1}$, we have
$$r_0(M)+r_1(M)\le r_0(M/xM).$$
Since $\H^i_\m(M)=\H^i_\m(M/\H^0_\m(M))$ for all $i\ge 1$, by Lemma \ref{2.7}, we have $r_i(M)\le r_{i-1}(M/xM)$ for all $i\ge 2$. Hence for all parameter $x\in \frak m^{n_1}$, we have
$$r(M)\le r(M/xM),$$
as required.
\end{proof}

%\begin{cor}\label{leCMt0}
% Let $M$ be a finitely generated $R$-module with $\dim M\ge 2$. Then there exists an integer $n$ such that for all parameter elements $x\in \m^n$, we have
%$$r_d(M)\le r_{d-1}(M/xM).$$
%\end{cor}
%\begin{proof}
%Since $\dim M\ge 2$ and $x$ is a parameter element of $M$, we have $\H^d_\m(M)= \H^d_\m(M/\H^0_\m(M))$ and $\H^{d-1}_\m(M/xM)=\H^{d-1}_\m(M/xM+\H^0_\m(M))$. Therefore we have been working under the assumption that $\depth M\ge 0$. Then by Lemma \ref{2.7}, we have $$r_d(M)\le r_{d-1}(M/xM),$$
%and the proof is complete.

%\end{proof}

%%%%%%%%%%%%%%%%%%%%%%%%%%%%%%%%%%%%%%%%%%%%%%%%%%%%%%%%%%%%%%%%%%%%%
%%%%%%%%%%%%%%%%%%%%%%%%%%%%%%%%%%%%%%%%%%%%%%%%%%%%%%%%%%%%%%%%%%%%%

The existence of Goto sequence of type II is established by our next Proposition.

\begin{prop}\label{exists}
Assume that $R$ is a homomorphic image of a Cohen-Macaulay
local ring and $I$  an $\fkm$-primary ideal of $R$.  Then there exists a system $\x=x_1,x_2,\ldots,x_s$ of elements of $I$ such that $\x$ is a Goto sequence of type II on $M$.

\end{prop}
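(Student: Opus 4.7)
The plan is to mimic the inductive construction of a Goto sequence given in Corollary \ref{exists001}, but at each step force the new element to lie in a sufficiently deep power of $\fkm$ so that the $r$-monotonicity condition of a type II sequence becomes automatic. The crucial extra ingredient is Corollary \ref{leCMt}: for any finitely generated module $N$ of positive dimension there is an integer $\nu(N)$ such that every parameter element of $N$ lying in $\fkm^{\nu(N)}$ satisfies $r(N)\le r(N/xN)$; the one-dimensional case of this fact is already contained in Lemma \ref{251a}.

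I proceed by induction on $s$. For the base case $s=1$, I take $n_1=\nu(M)$ from Corollary \ref{leCMt} applied to $M$, intersect $I$ with $\fkm^{n_1}$ to obtain an $\fkm$-primary ideal $I_1$, and then invoke Proposition \ref{exists element} for $M$ together with its dimension filtration $\calD$ and the ideal $I_1$. This produces $x_1\in I_1$ satisfying the three conditions of a length-one Goto sequence. By condition $(2)$ of Proposition \ref{exists element} the element $x_1$ avoids the associated primes of $M$ other than $\fkm$, so it is a parameter element of $M$; since $x_1\in\fkm^{n_1}$, the inequality $r(M)\le r(M/x_1M)$ follows.

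For the inductive step I assume $x_1,\ldots,x_{j-1}$ is a Goto sequence of type II of length $j-1$ on $M$, and put $N=M/\q_{j-1}M$. If $\dim N=0$ there is nothing left to do, so assume $\dim N\ge 1$. I apply Corollary \ref{leCMt} (or Lemma \ref{251a} when $\dim N=1$) to $N$ to obtain a threshold $n_j$. I then repeat verbatim the construction from the proof of Corollary \ref{exists001}, which for the unique $i$ with $d_i<j\le d_{i+1}$ selects $x_j$ inside the $\fkm$-primary ideal $I\cap\Ann(D_i)$, but with that ideal replaced by its intersection with $\fkm^{n_j}$. The intersection is still $\fkm$-primary, so the existence argument goes through unchanged and yields $x_j$ extending the Goto sequence. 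The Goto conditions force $x_j$ to be a parameter element of $N$, and the containment $x_j\in\fkm^{n_j}$ then delivers $r(M/\q_{j-1}M)\le r(M/\q_jM)$, closing the induction.

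The only delicate point, which I expect to be the main (though modest) obstacle, is checking that the extra constraint $x_j\in\fkm^{n_j}$ is compatible with the prime-avoidance and annihilator constraints already built into Corollary \ref{exists001}. Inspecting the proof of Proposition \ref{exists element} shows that the element is produced as $y^{n_0+1}$, where $y$ is chosen by prime avoidance inside an $\fkm$-primary ideal; shrinking that ideal by intersecting with $\fkm^{n_j}$ preserves $\fkm$-primality (since $\fkm^{n_j}$ is $\fkm$-primary and $I$ is $\fkm$-primary), leaves the prime-avoidance step intact, and automatically places the resulting power inside $\fkm^{n_j}$. Once this bookkeeping is handled the induction closes and the proposition follows.
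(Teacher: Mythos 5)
Your proposal is correct and follows essentially the same route as the paper: induct on $s$, use Corollary \ref{leCMt} to fix a threshold power $\fkm^{n_j}$ for the module $M/\q_{j-1}M$, and then apply Proposition \ref{exists element} with the $\fkm$-primary ideal $(\Ann(D_i)+\q_{j-1})\cap I\cap\fkm^{n_j}$ to produce the next element. If anything, your treatment of the base case $s=1$ (explicitly invoking Corollary \ref{leCMt} to secure $r(M)\le r(M/x_1M)$) is more careful than the paper's, which dismisses that case as having nothing to prove.
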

\begin{proof}
We shall now show the our result by induction on $s$. In the case in which $s=1$ there is nothing to prove, because of the Lemma \ref{exists element}. So assume inductively that $j \in\Bbb N$ with $j > 1$ and and that the desired
result has been established when $s = j -1$. Suppose that $d_{i}<j\le d_{i+1}$ for some $i$. By induction we have system $x_1,\ldots,x_{j-1}$ of $R$ such that satisfies the following conditions
\begin{enumerate}
\item[$(1)$] $\Ass(N_i/\q_{j-1} N_i)\subseteq \Assh(N_j/\q_{j-1}N_i)\cup \{\fkm\}$, where $\q_{j-1}=(x_1,\ldots,x_{j-1})$, for all $i=0,\ldots,\ell-1$. 
\item[$(2)$] The sequence $x_1,x_2,\ldots,x_{d_i}$ is a system of  parameters  of $D_i$.
%\item[$(3)$] $\q_{j-1}:x_j=\H^0_\m(M/\q_{j-1}M) \text{ and } x_j\in\fkp \text{ for all } \fkp\in\Ass(M/\q_{j-1}M)-\{\fkm\}.$
\end{enumerate}
Let $\overline R=R/\q_{j-1}$ $\overline M=M/\q_{j-1}M$ $\fkn=\m/\fkq_{j-1}$.  %Assume that $\calD^\prime = \{D^\prime_i\}_{0 \le i \le \ell^\prime}$ is the dimension filtration of $\overline M$ with $d^\prime_i=\dim D^\prime_i$. 

Choose $n$, as lager as possible, such that for all parameter $x\in \frak m^{n}$, we have
$$r(\overline M)\le r(\overline M/x \overline M).$$
%Since $\H^i_\m(M)=\H^i_\m(M/\H^0_\m(M))$ for all $i\ge 1$, by Lemma \ref{2.7}, we have $r_i(M)\le r_{i-1}(M/xM)$ for all $i\ge 2$. Hence for all parameter $x\in \frak m^{n_1}$, we have
%$$r(M)\le r(M/xM),$$
Put $J=(\Ann(D_i)+\q_{j-1})\cap I\cap \m^n$. Then $J\overline R$ is an $\fkn$-primary ideal of $\overline R$. 
 By Lemma \ref{exists element}, we can choose $x_{j+1}\in\Ann(D_i)\cap I\cap\m^n$, as required. With this observation, we can complete the inductive step and the proof.
\end{proof}

%%%%%%%%%%%%%%%%%%%%%%%%%%%%%%%%%%%%%%%%%%%%%%%%%%%%%%%%%
%%%%%%%%%%%%%%%%%%%%%%%%%%%%%%%%%%%%%%%%%%%%%%%%%%%%%%%%%
%%%%%%%%%%%%%%%%%%%%%%%%%%%%%%%%%%%%%%%%%%%%%%%%%%%%%%%%%
%%%%%%%%%%%%%%%%%%%%%%%%%%%%%%%%%%%%%%%%%%%%%%%%%%%%%%%%%
%%%%%%%%%%%%%%%%%%%%%%%%%%%%%%%%%%%%%%%%%%%%%%%%%%%%%%%%%

\section{The Proof of Main theorem}

The purpose of this section is to prove the equivalence of assertions (1) and (2) in  Theorem \ref{0}. We maintain the following settings.

\begin{setting}\label{2.6}{\rm
Assume that $R$ is a homomorphic image of a Cohen-Macaulay local ring. Let $\calD = \{D_i\}_{0 \le i \le \ell}$ be the dimension filtration of $M$ with $\dim D_i=d_i$. We put  $L = M/D_{\ell -1}$. 
}
\end{setting}

The notion of a sequentially Cohen-Macaulay module was introduced first by Stanley \cite{St} for the graded case and in \cite{Sch} for the local case.

\begin{dfn}[\cite{Sch, St}]\rm
We say that $M$ is a {\it sequentially Cohen-Macaulay $R$-module}, 
if $C_i$ is a Cohen-Macaulay $R$-module for all 
$1\leq i \leq \ell$, where $C_i=D_i/D_{i-1}$. 
\end{dfn}

Recall that an $R$-submodule $N$ of $M$ is {\it irreducible}, if $N$ is not written as the intersection of two larger $R$-submodules of $M$. Every $R$-submodule $N$ of $M$ can be expressed as an irredundant intersection of irreducible $R$-submodules of $M$ and the number of irreducible $R$-submodules appearing in such an expression depends only on $N$ and not on the expression. Let us call, for each parameter ideal $\q$ of $M$, the number $\calN(\q;M)$ of irreducible $R$-submodules of $M$ that appear in an irredundant irreducible decomposition of $\q M$ the index of reducibility of $M$ with respect to $\q$. Let $S$ be the set of parameter ideals of $M$ and $a$ integer number. Then we say that the index of reducibility for $S$ are {\it eventually $a$}  if there some integer number $n$ such that for all parameter ideals $\q\in S\cap \m^n$, we have $$\calN(\q;M)=a.$$  

We then have  the following.

\begin{theorem}[{\cite[Theorem 1.1]{Tr}}]\label{3.800}
Suppose that $M$ is a sequentially Cohen-Macaulay $R$-module. Then  the index of reducibility for all distinguished parameter  ideals of $M$ are eventually $r(M)$.
\end{theorem}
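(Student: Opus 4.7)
The plan is to induct on $\ell$, the length of the dimension filtration $\calD : 0 = D_0 \subsetneq D_1 \subsetneq \cdots \subsetneq D_\ell = M$. The base case $\ell = 1$ is the classical fact that for a Cohen-Macaulay module $M$ of dimension $d$ and any parameter ideal $\q$, we have $\calN(\q; M) = \dim_k \Ext_R^d(k, M) = r_d(M) = r(M)$. For the inductive step, I would look at the short exact sequence
$$0 \longrightarrow D_{\ell-1} \longrightarrow M \longrightarrow C_\ell \longrightarrow 0,$$
where $C_\ell = M/D_{\ell-1}$ is Cohen-Macaulay of dimension $d = \dim M$ because $M$ is sequentially Cohen-Macaulay. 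This is exactly the set-up for Lemma \ref{2.10}.

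Fix a distinguished parameter ideal $\q = (x_1, \ldots, x_d)$ of $M$ with $\q \subseteq \m^n$ for a sufficiently large $n$ (to be chosen larger than all the Goto numbers that arise in the induction). By definition of distinguished, $x_{d_{\ell-1}+1}, \ldots, x_d$ annihilate $D_{\ell-1}$, so $\q D_{\ell-1} = (x_1, \ldots, x_{d_{\ell-1}}) D_{\ell-1}$, and $(x_1, \ldots, x_{d_{\ell-1}})$ is a system of parameters for $D_{\ell-1}$. Moreover the conditions $(x_j \mid d_i < j \le d) D_i = 0$ for $i < \ell - 1$ are inherited, so $(x_1, \ldots, x_{d_{\ell-1}})$ is a distinguished parameter ideal of $D_{\ell-1}$; it also lies in $\m^n$, and $D_{\ell-1}$ is itself sequentially Cohen-Macaulay with dimension filtration of length $\ell - 1$. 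Applying Lemma \ref{2.10} to the displayed short exact sequence and then invoking the induction hypothesis on $D_{\ell-1}$ and the Cohen-Macaulay formula for $C_\ell$ yields
$$\calN(\q; M) = \calN(\q; D_{\ell-1}) + \calN(\q; C_\ell) = r(D_{\ell-1}) + r_d(C_\ell).$$

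It remains to identify the right-hand side with $r(M)$. This is where the long exact sequence of local cohomology associated with $0 \to D_{\ell-1} \to M \to C_\ell \to 0$ enters: since $C_\ell$ is Cohen-Macaulay of dimension $d$ we have $\H^j_\m(C_\ell) = 0$ for $j < d$, and since $\dim D_{\ell-1} = d_{\ell-1} < d$ we have $\H^d_\m(D_{\ell-1}) = 0$. Thus $\H^j_\m(M) \cong \H^j_\m(D_{\ell-1})$ for $j < d$ and $\H^d_\m(M) \cong \H^d_\m(C_\ell)$, so taking socles gives $r_j(M) = r_j(D_{\ell-1})$ for $j < d$, $r_d(M) = r_d(C_\ell)$, and hence $r(M) = r(D_{\ell-1}) + r_d(C_\ell)$, completing the induction. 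The main obstacle I expect is the bookkeeping that makes the word \emph{eventually} uniform: each application of Lemma \ref{2.10} at a level of the filtration requires $\q$ to lie in $\m^{\rmg(\cdot)}$ for the relevant quotient, and each induction step produces a smaller threshold. Because the filtration has finite length, taking $n$ to be the maximum of the Goto numbers of $M, D_{\ell-1}, \ldots, D_1$ accommodates all steps simultaneously; the verification that a distinguished $\q \subseteq \m^n$ for $M$ restricts to a distinguished parameter ideal of $D_{\ell-1}$ inside $\m^n$ is the key compatibility that makes the single constant $n$ work.
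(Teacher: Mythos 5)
The paper itself gives no proof of this theorem (it is quoted from \cite[Theorem 1.1]{Tr}), but your argument is correct and is exactly the argument that the paper's machinery is built around: the splitting $\calN(\q;M)=\calN(\q;D_{\ell-1})+\calN(\q;C_\ell)$ of Lemma \ref{2.10}, the identification $r(M)=r(D_{\ell-1})+r_d(C_\ell)$ via the vanishing $\H^j_\m(C_\ell)=0$ for $j<d$, and induction along the dimension filtration are used in precisely this way in the proof of Proposition \ref{P4.61}. Your handling of the uniform threshold (taking $n$ at least the maximum of the relevant Goto numbers, and checking that a distinguished $\q\subseteq\m^n$ for $M$ restricts to a distinguished parameter ideal of $D_{\ell-1}$ inside $\m^n$) is the right bookkeeping.
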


Our next major aim is the establishment of a proof of converse of the above Theorem. To prepare the ground for this, we begin the following result.

\begin{proposition}\label{P4.60}
 Assume that $d\ge 2$ and the index of reducibility for all distinguished parameter  ideals of $M$ are eventually $r(M)$. 
Then $L$ is Cohen-Macaulay.
\end{proposition}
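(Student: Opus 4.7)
The plan is to convert the hypothesis $\calN(\q;M)=r(M)$ into a saturated chain of equalities $r(M/\q_j M) = r(M)$ along a suitable distinguished Goto sequence, and then to deduce the Cohen-Macaulayness of $L$ via Lemma~\ref{property1}.

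First I would enlarge the threshold $n$ so that, simultaneously, the hypothesis applies to every distinguished $\q \subseteq \m^n$ and all Goto-number and Artin-Rees thresholds required by Lemmas~\ref{2.7}, \ref{2.10}, \ref{251a} and Corollary~\ref{leCMt} are met on each quotient $M/\q_j M$ that arises below. By Proposition~\ref{exists} I then choose a Goto sequence of type II $\x = x_1,\ldots,x_d$ of $M$ with each $x_i \in \m^n$. Lemma~\ref{property}(3) guarantees that $\q := (\x)$ is a distinguished parameter ideal, so the hypothesis gives $\calN(\q;M)=r(M)$. Since $M/\q M$ is Artinian, $\calN(\q;M) = r_0(M/\q M) = r(M/\q M)$.

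The type~II property then furnishes the chain
$$r(M) = r(M/\q_0 M) \le r(M/\q_1 M) \le \cdots \le r(M/\q_{d-1}M) \le r(M/\q M) = r(M),$$
forcing every inequality to be an equality and $r(M/\q_j M) = r(M)$ for all $0 \le j \le d$. Unpacking the proofs of Lemmas~\ref{251a} and~\ref{2.7}, the equality $r(M/\q_j M) = r(M/\q_{j+1} M)$ refines at each step to the identities
$$r_0(M/\q_j M) + r_1(M/\q_j M) = r_0(M/\q_{j+1} M)\quad\text{and}\quad r_i(M/\q_j M) = r_{i-1}(M/\q_{j+1} M)\ (i \ge 2).$$

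To finish I invoke Lemma~\ref{property1}, which reduces the Cohen-Macaulayness of $L$ to that of $\overline M/\overline N$, where $\overline M := M/\q_{d-2}M$ has dimension $2$ and $\overline N$ is its unmixed component. Since $\overline M/\overline N$ is unmixed of dimension $2$ one has $\H^0_\m(\overline M/\overline N) = 0$ automatically, and the task reduces to $\H^1_\m(\overline M/\overline N) = 0$. Exploiting the short exact sequence $0 \to \overline N \to \overline M \to \overline M/\overline N \to 0$, the identity $x_d \overline N = 0$ coming from Goto property~(2), and the refined equalities at the $j = d-2$ and $j = d-1$ stages, this vanishing should drop out of a socle count in the associated long exact sequence of local cohomology. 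The main obstacle will be precisely this final translation: converting the stepwise numerical identities into the injectivity on socles of the connecting maps, so that $r_1(\overline M/\overline N) = 0$ is forced, whence $\H^1_\m(\overline M/\overline N) = 0$ and, via Lemma~\ref{property1}, $L$ is Cohen-Macaulay.
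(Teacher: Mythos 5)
Your skeleton agrees with the paper's: pick a Goto sequence of type II inside $\m^n$, use $\calN(\q;M)=r_0(M/\q M)=r(M/\q M)$ together with the chain $r(M)\le r(M/\q_1M)\le\cdots\le r(M/\q M)=r(M)$ to force equalities everywhere, and then reduce, via Lemma~\ref{property1}, to showing that $A/N$ is Cohen--Macaulay, where $A=M/\q_{d-2}M$ and $N$ is its unmixed component. The refinement of $r(M/\q_jM)=r(M/\q_{j+1}M)$ into the termwise identities from Lemmas~\ref{251a} and~\ref{2.7} is also legitimate, since those lemmas give a sum of inequalities whose total is an equality.

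However, the step you defer --- ``this vanishing should drop out of a socle count'' --- is not a routine translation; it is the entire content of the proposition, and your setup does not supply the ingredients needed to carry it out. The paper does \emph{not} choose a full-length type~II sequence $x_1,\ldots,x_d$ in advance. It stops at $x_1,\ldots,x_{d-2}$, and only then, knowing $A$ and $N$, chooses $x_{d-1}\in\m^{n_0}$ with three extra properties that are not part of the definition of a Goto sequence (of either type): $x_{d-1}\H^1_\m(A/N)=0$ (available because $A/N$ is unmixed of dimension $2$, hence generalized Cohen--Macaulay), $r_1(A/(x_{d-1})+N)=r_1(A/N)+r_2(A/N)$, and $\calN(x_{d-1};N)=r_0(N)+r_1(N)$ (using $\dim N\le 1$ and Theorem~\ref{3.800}); it then chooses $x_d$ with $\calN(x_d;B)=r_0(B)+r_1(B)$ for the one-dimensional, hence sequentially Cohen--Macaulay, module $B=M/\q_{d-1}M$. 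Feeding these into the sequence $0\to N\to A\to A/N\to 0$ and the hypothesis $\calN(\q;M)=r(M)\le r(A)$ yields $r_1(N)+r_1(A/N)=r_1(A)$ and hence $\calN(x_{d-1};N)=r_0(B)$, which forces $\H^0_\m(A/(x_{d-1}A+N))=0$; only then does the multiplication sequence
$$0\to\H^0_\m(A/(N+x_{d-1}A))\to\H^1_\m(A/N)\xrightarrow{\ \cdot x_{d-1}\ }\H^1_\m(A/N)$$
combined with $x_{d-1}\H^1_\m(A/N)=0$ kill $\H^1_\m(A/N)$. With your a priori choice of the whole sequence there is no reason for $x_{d-1}$ to annihilate $\H^1_\m(A/N)$, and without that the injectivity of multiplication by $x_{d-1}$ proves nothing; likewise your appeal to Goto property (2) for ``$x_d\overline N=0$'' does not apply, since the unmixed component of $M/\q_{d-2}M$ is not one of the $D_i$. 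So the proposal as written has a genuine gap at the decisive step, even though the surrounding architecture is the paper's.
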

\begin{proof}
Since the index of reducibility for all distinguished parameter  ideals of $M$ are eventually $r(M)$, there exists  an integer $n$ such that for all distinguish parameter ideals $\fkq\subseteq \frak m^{n}$ we
    have
$$\calN(\q;M)=r(M).$$
By Proposition \ref{exists}, there exists a Goto sequence $x_1,\ldots,x_{d-2}$ of type II in  $\m^n$.
%  such that the following conditions hold true.
%\begin{enumerate}
%\item[$(1)$] $\Ass(N_i/\q_j N_i)\subseteq \Assh(N_i/\q_j N_i)\cup \{\fkm\}$, where $\q_j=(x_1,\ldots,x_j)$, for all $i=0,\ldots,\ell-1$ and $j=1,\ldots,d-2$. 
%\item[$(2)$] $x_jD_i=0$ for all $d_i<j\le d_{i+1}\le d-2$ and $i=0,\ldots,\ell-1$. 
%\item[$(3)$] $\q_{j-1}:x_j=\H^0_\m(R/\q_{j-1}) \text{ and } x_j\in\fkp \text{ for all } \fkp\in\Ass(R/\q_{j-1}R)-\{\fkm\},$  for all $j=1,\ldots,d-2$.
%\item[$(4)$] $r_{d-j}(R/\q_j)\le r_{d-j-1}(R/\q_{j+1})$ for all $j=0,\ldots,d-3$.
%\end{enumerate}
Let  $\q_{d-2}=(x_1,\ldots,x_{d-2})$ and $A=M/\q_{d-2}M$ and let $N$ denote the unmixed component of $A$. Then $A/N$ is a generalized Cohen-Macaulay $R$-module since $\dim A/N=2$ and $A/N$ is unmixed. Therefore there exists an integer $n_0>n$  such that for all parameters  $x\in\m^{n_0}$, we have $x H^1_\m(A/N)=0$, $r_1(A/(x)+N)=r_1(A/N)+r_2(A/N)$ and $\calN(x;N)=r_0(N)+r_1(N)$, because $\dim N\le1$.  Suppose that $d_{i_0}<d-2\le d_{i_0+1}$ for some $i_0$. 
Then $\Ann(\fka_{i_0})+\q_{d-2}$ is an $\m$-primary ideal of $R$. Then we can choose $x_{d-1}\in \m^{n_0}\cap \Ann(\fka_{i_0})$ as in Proposition \ref{exists element}.

Let $\q_{d-1}=(\q_{d-2},x_{d-1})$ and $B=M/\q_{d-1}M$. Since $\dim B=1$, $B$ is sequentially Cohen-Macaulay. It follows from $\Ann \fka_{\ell-1}+\q_{d-1}$ is an $\m$-primary ideal of $R$ and Theorem \ref{3.800}, we have choose $x_d\in\Ann \fka_{\ell-1}$ such that 
$$\calN(x_dB;B)=r_1(B)+r_0(B).$$

On the other hand, it follows from the exact sequence
$$0\to N\to A\to A/N\to 0$$
and $x_{d-1}$ is a regular on $A/N$ that $r_2(A/N)=r_2(A)$ and
$$0\to N/x_{d-1}N\to B\to A/(x_{d-1})+N\to 0.$$
Since $\dim N/x_{d-1}N=0$, the sequence
$$0\to \H^0_\m(N/x_{d-1}N)\to \H^0_\m(B)\to \H^0_\m(A/(x_{d-1})+N)\to 0.$$
is exact and  $\H^1_\m(B)=\H^1_\m(A/(x_{d-1})+N)$. Thus  $$r_1(B)=r_1(A/(x_{d-1})+N)=r_1(A/N)+r_2(A/N)=r_1(A/N)+r_2(A),$$
because of the choice of $x_{d-1}$.
It follows from the above exact sequence that the sequence
$$0\to (0):_{N/x_{d-1}N}\m \to (0):_{\H^0_\m(B)}\m\to (0):_{\H^0_\m(A/x_{d-1}A+N)}\m.$$
is exact and so $r_0(N)+r_1(N)=\calN(x_{d-1};N)\le r_0(B)$, because of the choice of $x_{d-1}$.
Since $r_0(B)+r_1(B)=\calN(x_d;B)$, therefore  we have 
$$r_0(N)+r_1(N)+r_1(A/N)+r_2(A)\le \calN(x_d;B)= \calN(\q;M)$$
By the definition of Goto sequence of type II, we have $\calN(\q;M)\le r(M)\le r(A)$. Thus  $r_1(N)+r_1(A/N)\le r_1(A)$ because $r_0(N)=r_0(A)$ and $r(A)=r_0(A)+r_1(A)+r_2(A)$. On the other hand, it follows from the exact sequence
$$0\to N\to A\to A/N\to 0$$
that the sequence
$$0\to \H^1_\m(N)\to \H^1_\m(A)\to \H^1_\m(A/N)\to 0$$
is exact. Thus $r_1(A)\le r_1(N)+r_1(A/N)$ and so $ r_1(N)+r_1(A/N)=r_1(A)$.
 It follows that
$$r_0(N)+r_1(N)+r_1(A/N)+r_2(A)= \calN(x_d;B).$$
Since $r_0(B)+r_1(B)=\calN(x_d;B)$ and $r_1(B)=r_1(A/N)+r_2(A)$, therefore  we have 
$\calN(x_{d-1};N)=r_0(B)$. It follows from the above exact sequence that $\H^0_\m(A/x_{d-1}A+N)=0$. 
Now we derive from the exact sequence
$$0 \to A/N\overset{.x_{d-1}}\to A/N \to  A/N+x_{d-1}A \to 0$$
the following exact sequence
$$0 \to \H^0_\m(A/N+x_{d-1}A) \to \H^1_\m(A/N)
\overset{.x_{d-1}}\to \H^1_\m(A/N) .$$
Thus  $\H^1_\fkm(A/N)=0$, because $x_{d-1}\H^1_\m(M)=0$. Hence $L$ is Cohen-Macaulay, because of Lemma \ref{property1}, and the proof is complete.
%\end{proof}
\end{proof}

The next result is of special significance as it is used as a basis for proving the Theorem \ref{0}, and is formulated as follows.

%The following statement provide information that will be helpful later on.

\begin{proposition}\label{P4.61}
Assume that there exists  an integer $n$ such that for all distinguish parameter ideals $\fkq\subseteq \frak m^{n}$ we
    have
$$ \calN(\q;M)\le r(M).$$
Then $R$ is sequentially Cohen-Macaulay.
\end{proposition}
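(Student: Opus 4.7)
The plan is to induct on $d=\dim M$. For $d=1$ every finitely generated $R$-module is sequentially Cohen-Macaulay, since its dimension filtration is $0\subseteq \H^0_\m(M)\subsetneq M$ and $M/\H^0_\m(M)$ automatically has positive depth and dimension one. So assume $d\ge 2$ and the proposition holds in dimensions strictly less than $d$.

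First I would establish that $L=M/D_{\ell-1}$ is Cohen-Macaulay by essentially rerunning the argument of Proposition~\ref{P4.60}. The crucial observation is that that proof invokes the hypothesis ``$\calN(\q;M)=r(M)$'' only as an inequality: the chain $r_0(N)+r_1(N)+r_1(A/N)+r_2(A)\le \calN(\q;M)\le r(M)\le r(A)=r_0(A)+r_1(A)+r_2(A)$, with the rightmost bound supplied by the Goto sequence of type II, combines with the reverse inequality $r_1(A)\le r_1(N)+r_1(A/N)$ coming from the long exact cohomology sequence of $0\to N\to A\to A/N\to 0$ to force every intermediate inequality to be an equality. Thus the present weaker hypothesis $\calN(\q;M)\le r(M)$ still yields $\H^1_\m(A/N)=0$, and Lemma~\ref{property1} gives that $L$ is Cohen-Macaulay.

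If $\ell=1$ then $D_{\ell-1}=0$, so $M=L$ is Cohen-Macaulay and we are finished. Otherwise $\ell\ge 2$, and I would verify that $D_{\ell-1}$ inherits the hypothesis of the proposition. Fix a threshold $n\ge \rmg(M)$ for which the assumption on $M$ applies. Given any distinguished system of parameters $x_1,\dots,x_{d_{\ell-1}}$ of $D_{\ell-1}$ contained in $\m^{n'}$ (for a suitably chosen $n'\ge n$), I extend it to a distinguished system of parameters $x_1,\dots,x_d$ of $M$ by choosing $x_{d_{\ell-1}+1},\dots,x_d\in \Ann(D_{\ell-1})\cap \m^n$ generically via the Prime-Avoidance machinery of Proposition~\ref{exists element}. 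Writing $\q=(x_1,\dots,x_d)$ and $\q'=(x_1,\dots,x_{d_{\ell-1}})$, Lemma~\ref{2.10} gives $\calN(\q;M)=\calN(\q;D_{\ell-1})+\calN(\q;L)$; the first summand equals $\calN(\q';D_{\ell-1})$ because $x_{d_{\ell-1}+1},\dots,x_d$ annihilate $D_{\ell-1}$, and the second equals $r_d(L)=r_d(M)$ by Cohen-Macaulayness of $L$. The long exact cohomology sequence of $0\to D_{\ell-1}\to M\to L\to 0$, combined with $\H^j_\m(L)=0$ for $j<d$ and $\H^j_\m(D_{\ell-1})=0$ for $j\ge d$, yields $r_j(D_{\ell-1})=r_j(M)$ for $j<d$ and $r_d(L)=r_d(M)$, hence $r(M)=r(D_{\ell-1})+r_d(M)$. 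Substituting into $\calN(\q;M)\le r(M)$ produces $\calN(\q';D_{\ell-1})\le r(D_{\ell-1})$, which is exactly the hypothesis of the proposition for $D_{\ell-1}$.

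Since $\dim D_{\ell-1}=d_{\ell-1}<d$, the inductive hypothesis implies $D_{\ell-1}$ is sequentially Cohen-Macaulay. Its dimension filtration is the truncation $\{D_i\}_{0\le i\le \ell-1}$ of that of $M$, so together with the Cohen-Macaulayness of $C_\ell=L$ every quotient $C_i=D_i/D_{i-1}$ is Cohen-Macaulay and $M$ is sequentially Cohen-Macaulay. The main obstacle I anticipate is the uniform bookkeeping in the extension step: producing a single threshold $n'$ for which \emph{every} distinguished parameter ideal of $D_{\ell-1}$ in $\m^{n'}$ extends to a distinguished parameter ideal of $M$ in $\m^n$ so that Lemma~\ref{2.10} may be applied. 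This requires threading the Prime-Avoidance constructions of Section~2 through $D_{\ell-1}$ and $M$ simultaneously; the remainder of the argument is a repackaging of inequalities already implicit in the proof of Proposition~\ref{P4.60}.
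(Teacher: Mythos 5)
Your proposal follows essentially the same route as the paper's own proof: induct on $d$, invoke Proposition~\ref{P4.60} (whose argument, as you note, only ever uses the inequality $\calN(\q;M)\le r(M)$) to conclude that $L=M/D_{\ell-1}$ is Cohen--Macaulay, and then use Lemma~\ref{2.10} together with the long exact sequence of local cohomology for $0\to D_{\ell-1}\to M\to L\to 0$ to split $\calN(\q;M)=\calN(\q;D_{\ell-1})+\calN(\q;L)$ and $r(M)=r(D_{\ell-1})+r_d(M)$, passing the hypothesis down to $D_{\ell-1}$ for the induction. If anything your bookkeeping is slightly cleaner, since you descend only the inequality $\calN(\q';D_{\ell-1})\le r(D_{\ell-1})$, which is exactly the inductive hypothesis, whereas the paper asserts the equality $\calN(\q;M)=r(M)$ at this step, which additionally requires the lower bound of Corollary~\ref{Cmain}.
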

\begin{proof}
 We use induction on the dimensional  $d$ of $M$. In the case in which $\dim M=1$, it is clear that $R$ is sequentially Cohen-Macaulay. Now suppose, inductively, that $d > 1$ and that the result
has been proved for smaller values of $d$.  Recall that $D_{\ell-1}$ is the unmixed component of $M$. Therefore, by the Prime Avoidance Theorem, we can choose the part of a system $x_{d_{\ell-1}+1},\ldots,x_d$ of parameters of $R$ such that $\fkb\subseteq \fkm^n$ and $\fkb M \cap D_{\ell-1}=0$, where $\fkb=(x_{d_{\ell-1}+1},\ldots,x_d)$. 
%Consequently, $(D_i+\fkb M)/\fkb M=D_i$ for all $i=0,\ldots,\ell-1$, and so $\Lambda(M)-\{d\}\subseteq \Lambda(M/\fkb M)$.
 On the other hand, since $\calN(\q;M)\le r(M)$ for all distinguished parameter ideals $\q\subseteq\m^n$, by Proposition \ref{P4.60}, $L$ is Cohen-Macaulay.
A simple inductive argument therefore shows that it is enough for us to prove that there exists $n_1$ such that for all distinguished parameter ideals $\q\subseteq \m^{n_1}$ of $D_{\ell-1}$, we have $$\calN(\q;D_{\ell-1})=r(D_{\ell-1}).$$

In fact, since $L$ is Cohen-Macaulay, straightforwardly by Lemma \ref{2.10}, there exists $n_0$ such that for all distinguished parameter ideals $\q\subseteq \m^{n_0}$, we have
$$\calN(\q;M)=\calN(\q;D_{\ell-1})+\calN(\q;L).$$

Choose $n_1\ge \max\{n_0,n\}$.  Assume that $x_1,\ldots,x_{d_{\ell-1}}$ is a distinguished system of parameters of $D_{\ell-1}$ such that $(x_1,\ldots,x_{d_{\ell-1}})\subseteq \m^{n_1}$. It would then follow from the definition of distinguished system of parameter that system $x_1,\ldots,x_d$ of parameters is distinguished  of $M$ such that $\q\subseteq\m^{n_1}$, where $\q=(x_1,\ldots,x_{d_{\ell-1}},\fkb)$.  Since $n_1\ge n$, we have
$\calN(\q;M)=r(M)=\sum\limits_{j\in\Bbb Z}r_j(M)$. On other hand, it follows from $L$ is Cohen-Macaulay and the  exact sequence
$$0\to D_{\ell-1}\to M \to L\to 0$$
that the following sequence
$$0\to \H^{d}_\m(D_{\ell-1})\to \H^{d}_\m(M) \to \H^{d}_\m(L)\to 0$$
is exact. Since $\dim D_{\ell-1}<d$, we have $\H^{d}_\m(M) \cong \H^{d}_\m(L)$ and so $r_d(M)=r_d(L)$. Since $L$ is Cohen-Macaulay and $\q$ is a parameter ideal of $L$, we have $\calN(\q;L)=r_d(L).$
Therefore, it follows that we obtain $\calN(\q;D_{\ell-1})=r(D_{\ell-1})$. Hence for all distinguished parameter ideals $\q\subseteq \m^n$ of $D_{\ell-1}$, we have $$\calN(\q;D_{\ell-1})=r(D_{\ell-1}).$$

\end{proof}

\begin{proof}[Proof of Theorem $\ref{0}$]
(1) $\Rightarrow$ (2) This is now immediate from Theorem \ref{3.800}.\\
 (2) $\Rightarrow$ (3) is trivial.\\
(3) $\Rightarrow$ (1) This now immediate from  Proposition \ref{P4.61}.

\end{proof}

\begin{cor}\label{C3.6}
%Assume that $R$ is a homomorphic image of a Cohen-Macaulay local ring. Then t
The following statements are equivalent.
\begin{enumerate} 
\item[$({\rm 1})$] $M$ is Cohen-Macaulay.

\item[$({\rm 2})$]  The index of reducibility for all parameter  ideals of $M$ are eventually $r_d(M)$. 

\item[$({\rm 3})$]  The index of reducibility for all distinguished parameter  ideals of $M$ are eventually $r_d(M)$.

\item[$({\rm 4})$]  There exists an integer $n$ such that for all distinguished parameter ideals $\q\subseteq \m^n$, we have
$$\calN(\q;M)\le r_d(M).$$

\end{enumerate}

\end{cor}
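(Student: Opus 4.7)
The plan is to bootstrap off the main result. Since $r_d(M)\le r(M)=\sum_j r_j(M)$, condition~(4) automatically implies the hypothesis of part~(3) of Theorem~\ref{0}; so Theorem~\ref{0} will give sequential Cohen-Macaulayness for free, and the remaining work is to upgrade this to full Cohen-Macaulayness by exploiting the sharpness gap between $r_d(M)$ and $r(M)$.

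For the trivial directions, (2)$\Rightarrow$(3) and (3)$\Rightarrow$(4) are immediate from the definitions. For (1)$\Rightarrow$(2), I would invoke two standard facts already noted in the preliminaries: when $M$ is Cohen-Macaulay every parameter ideal of $M$ is distinguished, and the vanishing $\H^j_\m(M)=0$ for $j<d$ collapses $r(M)$ to $r_d(M)$. Combined with the classical equality $\calN(\q;M)=r_d(M)$ valid for every parameter ideal of a Cohen-Macaulay module, condition~(2) follows at once (with equality for \emph{all} parameter ideals, not merely eventually).

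The crux is therefore (4)$\Rightarrow$(1). Starting from the assumption that $\calN(\q;M)\le r_d(M)\le r(M)$ for all distinguished parameter ideals $\q\subseteq\m^n$, Theorem~\ref{0} forces $M$ to be sequentially Cohen-Macaulay. Theorem~\ref{3.800} then supplies an integer $n_1\ge n$ such that $\calN(\q;M)=r(M)$ for every distinguished $\q\subseteq\m^{n_1}$. Combining this with the hypothesis yields $r(M)\le r_d(M)$, and since $r(M)=\sum_j r_j(M)\ge r_d(M)$, equality must hold throughout, so $r_j(M)=0$ for every $j\neq d$. Because each $\H^j_\m(M)$ is Artinian, the vanishing of its socle forces the vanishing of the whole module, and hence $\H^j_\m(M)=0$ for all $j<d$. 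Consequently $\depth M=d$ and $M$ is Cohen-Macaulay.

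The main obstacle has already been overcome inside Theorem~\ref{0}; in this corollary the only genuinely new ingredient is the passage from the numerical identity $r(M)=r_d(M)$ to the vanishing of all lower local cohomologies, which is routine thanks to the Artinian socle argument. I expect no hidden difficulty in the presentation beyond keeping the chain of implications clean.
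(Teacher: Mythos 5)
Your proposal is correct and follows essentially the same route as the paper: the nontrivial implication $(4)\Rightarrow(1)$ is handled exactly as in the text, by feeding the inequality $\calN(\q;M)\le r_d(M)\le r(M)$ into Theorem \ref{0} to get sequential Cohen--Macaulayness, then using Theorem \ref{3.800} to force $r(M)=r_d(M)$ and hence $r_j(M)=0$ for $j<d$. Your added remark that the vanishing of the socle of the Artinian module $\H^j_\m(M)$ forces $\H^j_\m(M)=0$ just makes explicit a step the paper leaves implicit.
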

\begin{proof}
(1) $\Rightarrow$ (2) This is now immediate from Theorem \ref{0}.\\
(2) $\Rightarrow$ (3) and (3) $\Rightarrow$ (4) are trivial.\\
(4) $\Rightarrow$ (1) Since $r_d(M)\le \sum\limits_{j\in\Bbb Z}r_j(M)=r(M)$, 
 for all distinguished parameter ideals $\q\subseteq \m^n$, we have $\calN(\q;M)\le r(M)$. Thus by Theorem \ref{0}, 
$M$ is sequentially Cohen-Macaulay. By Theorem \ref{3.800}, there exists a distinguished parameter ideal $\q\subseteq$ such that we have 
$r(M)=\calN(\q;M).$ It follows from the hypothesis that $r(M)=r_d(M)$, and so $r_i(M)=0$ for all $i<d$. Hence $M$ is Cohen-Macaulay.

\end{proof}

\begin{theorem}\label{C3.7}
 $R$ is Gorenstein if and only if the index of reducibility for all distinguished parameter  ideals are eventually $1$.

\end{theorem}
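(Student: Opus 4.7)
The forward direction will be immediate from the classical theorem of Northcott and Rees together with the fact that a Gorenstein ring is Cohen--Macaulay: in the Cohen--Macaulay case every system of parameters is distinguished, and every parameter ideal of a Gorenstein ring is irreducible, so $\calN(\q;R) = 1$ for every parameter ideal $\q$ of $R$, which is certainly eventual.

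For the converse, the plan is to apply Theorem~\ref{0} to the module $M = R$. Assume there is an integer $n$ such that $\calN(\q;R) = 1$ for every distinguished parameter ideal $\q \subseteq \m^n$. Because $\H^d_\m(R)$ is a nonzero Artinian $R$-module, its socle is nonzero, so $r_d(R) \geq 1$ and hence $r(R) \geq 1$. The hypothesis therefore reads $\calN(\q;R) = 1 \leq r(R)$ for all such $\q$, which is exactly condition~(3) of Theorem~\ref{0}. The implication $(3) \Rightarrow (1)$ then tells me that $R$ is sequentially Cohen--Macaulay, and $(1) \Rightarrow (2)$ upgrades this to $\calN(\q;R) = r(R)$ for all distinguished $\q$ in some $\m^{n'}$. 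Comparing with the hypothesis $\calN(\q;R) = 1$ forces $r(R) = 1$.

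It remains to deduce Gorensteinness from $r(R) = 1$. Since $r_d(R) \geq 1$ and $r(R) = \sum_j r_j(R) = 1$, I must have $r_d(R) = 1$ and $r_j(R) = 0$ for all $j < d$. For each $j < d$, the local cohomology $\H^j_\m(R)$ is $\m$-torsion, so any nonzero element would, after multiplication by a suitable power of $\m$, yield a nonzero socle element; hence $r_j(R) = 0$ actually forces $\H^j_\m(R) = 0$ for all $j < d$. This means $R$ is Cohen--Macaulay, and the condition $r_d(R) = 1$ (the Cohen--Macaulay type) now identifies $R$ as Gorenstein.

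The main obstacle here is conceptual rather than computational: I need condition~(3) of Theorem~\ref{0} to be applicable with $M = R$, for which the a priori lower bound $r(R) \geq 1$ is crucial; once that is in place, the remainder of the argument is essentially a bookkeeping exercise with the Bass-type numbers $r_j(R)$ combined with the observation that nonzero $\m$-torsion modules over a local ring have nonzero socle.
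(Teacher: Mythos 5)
Your proof is correct and follows essentially the same route as the paper: both directions hinge on the a priori bound $1\le r_d(R)$ coming from the non-vanishing of $\H^d_\m(R)$, which lets the hypothesis feed into the main theorem's condition (3). The only cosmetic difference is that the paper invokes Corollary \ref{C3.6} (the $r_d$-version of the characterization), whereas you apply Theorem \ref{0} directly and then re-derive the step $r(R)=r_d(R)\Rightarrow \H^j_\m(R)=0$ for $j<d$, which is exactly the content of the paper's proof of that corollary.
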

\begin{proof}
  $\text {Only if:} $ Since $R$ is Gorenstein, $R$ is Cohen-Macaulay ring and $r_d(R)=1$. By Corollary \ref{C3.6}, the index of reducibility for all distinguished parameter  ideals of $R$ are eventually $1$.

$\text{If}$. Since the index of reducibility for all distinguished parameter  ideals of $R$ are eventually $1$
there exists an integer $n$ such that for all distinguished parameter ideals $\q\subseteq \m^n$, we have
$$\calN(\q;R)=1\le r_d(R).$$
By the Corollary \ref{C3.6}, $R$ is Cohen-Macaulay and so  $r_d(R)=1$.
 Hence $R$ is Gorenstein, as required.

\end{proof}

%\begin{cor}\label{Cmain}
%For all  integers $n$  there exists  a distinguished parameter ideal $\frak q\subseteq \frak m^n$, we have
%$$ r(M)\leqslant \calN(\q;M)$$.
%\end{cor}
%\begin{proof}
%The result follows  from Theorem \ref{0}.
%\end{proof}

%%%%%%%%%%%%%%%%%%%%%%%%%%%%%%%%%%%%%%%%%%%%%%%%%%%%
%%%%%%%%%%%%%%%%%%% References %%%%%%%%%%%%%%%%%%%%

\end{document}